\newcommand{\G}{\Gamma}
\newcommand{\Z}{\mathbb{Z}}
\newcommand{\N}{\mathbb{N}}
\newcommand{\ff}{\mathbb{F}}
\newcommand{\sk}{\smallskip}
\newtheorem{thm}{Theorem}[section]
\newtheorem{prop}[thm]{Proposition}
\newtheorem{coro}[thm]{Corollary}
\theoremstyle{definition}
\newtheorem{rem}[thm]{Remark}
\newtheorem{exam}[thm]{Example}
\theoremstyle{remark}
\begin{document} \sloppy
\numberwithin{equation}{section}
\title[Numb.\@ of cliques of Paley-type graphs over finite commutative local rings]{Number of cliques of Paley-type graphs \\ over finite commutative local rings} 
\author[A.L.\@ Gallo, D.E.\@ Videla]{Andrea L.\@ Gallo, Denis E.\@ Videla}
\dedicatory{\today}
\keywords{Local rings, generalized Paley graphs, cliques}
\thanks{2020 {\it Mathematics Subject Classification.} Primary 05C25, 05C30;\, Secondary 11T30.}
\thanks{Partially supported by CONICET, FONCyT and SECyT-UNC}

\address{Andrea L.\@ Gallo, FaMAF -- CIEM (CONICET), Universidad Nacional de C\'ordoba, \newline
	Av.\@ Medina Allende 2144, Ciudad Universitaria, (5000) C\'ordoba, Argentina. 
	\newline {\it E-mail: andregallo88@gmail.com}}
\address{Denis E.\@ Videla, FaMAF -- CIEM (CONICET), Universidad Nacional de C\'ordoba, \newline
	Av.\@ Medina Allende 2144, Ciudad Universitaria,  (5000) C\'ordoba, Argentina. 
	\newline {\it E-mail: devidela@famaf.unc.edu.ar}}

\begin{abstract}
In this work, given $(R,\frak m)$ a finite commutative local ring with identity and $k \in \N$ with $(k,|R|)=1$, 
we study the number of cliques of any size in the Cayley graph 
$G_R(k)=Cay(R,U_R(k))$ 
with $U_R(k)=\{x^k : x\in R^*\}$. 
Using the known fact that the graph $G_R(k)$ can be obtained by blowing-up the vertices of $G_{\ff_{q}}(k)$ a number $|\frak{m}|$ of times, 
with independence sets the cosets of $\frak{m}$, where $q$ is the size of the residue field $R/\frak m$ (see \cite{PV9}).
Then, by using the above blowing-up, 
we reduce the study of the number of cliques in $G_R(k)$ over the local ring $R$ to the computation of the number of cliques of $G_{R/\frak{m}}(k)$ over the finite residue field $R/\frak m \simeq \ff_q$.
In this way, using known numbers of cliques of generalized Paley graphs ($k=2,3,4$ and $\ell=3,4$), 
we obtain several explicit results for the number of cliques 
over finite commutative local rings with identity.
\end{abstract}

\maketitle

\section{Introduction}
In this work we study the number of cliques of  $G_R(k)$ over finite commutative local rings with identity $R$, where $(k,|R|)=1$, 
through the decomposition of $G_R(k)$ in terms of certain graph operation called blow-up. 
The case when $k$ and $|R|$ are not coprime remains open. 
We will reduce the computation of the number of $\ell$-cliques $\mathcal{K}_{\ell}(G_R(k))$ over finite commutative local rings $(R,\frak m)$ 
to the problem of computing  $\mathcal{K}_{\ell}(\G(k,q))$ where $\G(k,q)$ is the generalized Paley graph. 
More precisely, we will show that $\mathcal{K}_{\ell}(G_R(k))$ can be put in terms of $\mathcal{K}_{\ell}(\G(k,q))$, 
where the residue field $R/\frak m$ is isomorphic to $\ff_q$.

\subsection*{Preliminaries}

Let $G$ be a finite abelian group and $S$ a subset of $G$ with $0\notin S$. The \textit{Cayley graph } $\G=Cay(G,S)$ is the directed graph whose vertex set is $G$ and $v, w \in G$ form a directed edge (or arc) $\overrightarrow{vw}$ of $\Gamma$ from $v$ to $w$ if $w-v \in S$. Since $0\notin S$ then $\Gamma$ has no loops. 
Notice that if $S$ is symmetric, that is $-S=S$, then we can consider $Cay(G,S)$ as undirected (and conversely), and hence $Cay(G,S)$ is $|S|$-regular. 
In this work, we always consider $S$ symmetric.


We are interested in the \textit{unitary Cayley graph of $k$-th powers} or \textit{$k$-unitary Cayley graph} defined by 
\begin{equation} \label{GRk}
	G_R(k)= Cay(R,U_R(k)) \qquad \text{where} \qquad U_R(k) = \{x^k: x\in R^*\}.
\end{equation} 
The case $G_R(1)$ are the called  unitary Cayley graphs, they were extensively studied  (see \cite{Ak+}, \cite{Il}, \cite{Ki+}, \cite{LZ} and \cite{PV5}). 
Recently, Liu and Zhou \cite{LZ2} defined and studied the \textit{quadratic unitary Cayley graphs} 
$\mathcal{G}_R = Cay(R,T_{R})$,
where $T_R=Q_{R}\cup (-Q_R)$ and $Q_{R}=\{x^2: x\in R^*\}$ with $R$ a finite commutative ring with identity, when $Q_R$ is symmetric then $\mathcal{G}_R=G_{R}(2)$. 
Some structure properties of $G_{R}(k)$, in the case $R$ local was strudied in \cite{PV9} (see also \cite{BB} for $R=\mathbb{Z}_{p^{\alpha}}$).
In general, notice that $G_{R}(k)$ is directed, moreover $U_R(k)$ is a symmetric set if and only if $-1\in U_R(k)$. 


One other interesting instance of these graphs is when $R=\ff_q$ is a finite field of cardinality $q=p^r$ with $p$ prime, 
where $k$ a non-negative integer (with $k\mid q-1$), 
this graph is called \textit{generalized Paley graph} (\textit{GP-graph} for short) and is denoted by $\G(k,q)$,
more precisely 
\begin{equation} \label{Gammas}
\G(k,q) = Cay(\ff_{q},U_{k}) \qquad \text{with} \qquad U_k = \{ x^{k} : x \in \ff_{q}^*\}.
\end{equation} 
Notice that $\G(k,q)$ is an $n$-regular graph with $n=\tfrac{q-1}k$. 
The graph $\G(k,q)$ is undirected either if $q$ is even or if $k$ divides $\tfrac{q-1}2$ when $p$ is odd (equivalently if $n$ is even when $p$ is odd) 
and it is connected if $n$ is a primitive divisor of $q-1$ (see  \eqref{prim div}).  
When $k=1$ we get the complete graph $\G(1,q)=K_q$ and when $k=2$ we get the classic Paley graph $\Gamma(2,q) = P(q)$. 
The graphs $\G(3,q)$ and $\G(4,q)$ are also of interest (see \cite{PV8}).
The GP-graphs have been extensively studied in the few past years (see for instance \cite{LP}, \cite{PP}, \cite{PV6}, \cite{PV7}, \cite{PV4}. \cite{V}, \cite{Y1}, \cite{Y2}.) 

\subsubsection*{Number of cliques in generalized Paley graphs}

Recall that a complete graph of $n$ vertices is an undirected graph such that all of its vertices are neighbours, 
it is denoted by $K_{n}$.

Given $G$ be a graph, a clique in $G$ is a complete subgraph of $G$. The clique number of $G$ is the size of its maximum clique in $G$ and is usually denoted by $\omega(G)$.
The number of cliques of size $\ell$ in $G$ is denoted by $\mathcal{K}_{\ell}(G)$, notice that 
\begin{equation}\label{clique numb}
	\omega(G)=\max \{\ell \in \mathbb{N}: \mathcal{K}_{\ell}(G) \neq 0\}.
\end{equation}	

The clique number of $\G(k,q)$ was studied in  
\cite{Y1}, \cite{Y2}. 
In \cite{EPS}, the authors found a closed formula for $\mathcal{K}_{\ell}(\G(2,q))$ for $\ell=3,4$ and $q=p$ prime, 
in terms of certain binary quadratic forms, see also \cite{ABLMP} for an extension to $q$ a prime power.
In \cite{DM}, the authors found a general formula for $\mathcal{K}_{\ell}(\G(k,q))$ for $\ell=3,4$ in terms of hypergeometric sums, they also
gave more explicit formulas for $k = 2,3,4$. 

In the case of $G_{R}(k)$ when $R$ is not a finite field, 
recently Bhowmik and Barman studied the number of cliques for the local ring $R=\Z_{p^{\alpha}}$ with $k=2$ (see \cite{BB}).
For $R$ non-local, Das studied the number of cliques for $R=\Z_{pq}$ with $k=2$ and $p,q$ primes, this formula was recently generalized for $R=\Z_{n}$ for $n$ odd in \cite{BB2}.
 
 
\subsection*{Outline and main results}
In what follows let $R$ be a finite commutative ring with identity and let $k\in \N$ be coprime with $|R|$. We now give a brief account of the main results in the paper. 
 
In Section \ref{sec2}, we recall some facts about $t$-balanced blow-up operations 
and show a general reduction formula for $\mathcal{K}_{\ell}(G^{(m)})$ in terms of $\mathcal{K}_{\ell}(G)$. Thus, we recall the structure of the graph $G_R(k)$ for $R$ a local ring with unique maximal ideal $\frak m$. In this case, we have that $G_R(k)=G_{R/\frak{m}}(k)^{(m)}$ where $m=|\frak{m}|$ and so,
by using the formula obtained previously, 
we show that we can compute $\mathcal{K}_{\ell}(G_{R}(k))$ in terms of 
$\mathcal{K}_{\ell}(G_{R/\frak{m}}(k))$ and $m$. 

In Section \ref{sec 4}, we study the number of complete subgraphs of size $3$ (triangles) of $G_{R}(k)$ for $k=2,3,4$. 
We also show that in the case $R$ a finite field, 
we obtain a derived property in field extensions 
for the number of cliques, 
more precisely for $k=3$ and $4$ we show that we can derive the values of $\mathcal{K}_{3}(\G(k,q^{\ell}))$ 
from the value of $\mathcal{K}_{3}(\G(k,q))$, 
under some hypothesis. 

Finally, in section \ref{sec 5} we find a close formula for the number of clique of size $4$
for $G_{R}(2)$. In the same way as in $3$-cliques, we obtain a derived property in field extensions 
for the number of $4$-cliques.

\section{Balanced blow-ups and cliques in $G_{R}(k)$}\label{sec2}	
	
\subsubsection*{Balanced blow-ups and cliques}	
For any graph $G$ and $m\in \N$, the \textit{(balanced) blow-up of order $m$} of $G$, denoted  $G^{(m)}$, is the graph obtained by replacing each vertex $x$ of $G$ by a set $V_x$ of $m$ independent vertices and every edge $\{x, y\}$ of $G$ by a complete bipartite graph $K_{m,m}$ with parts $V_x$ and $V_y$ (of course $G^{(1)} = G$). 
Notice that we have the natural isomorphism 
\begin{equation} \label{blowup}
	G \otimes \mathring{K}_m \simeq G^{(m)},
\end{equation}
where $\mathring{\G}$ denotes a graph $\G$ with a loop added at each vertex. 
 
\begin{prop}\label{blow clique}
	Let $G$ be a simple undirected graph and let $m,\ell\in \N$. Then, 
	$$\mathcal{K}_{\ell}(G^{(m)})=\mathcal{K}_{\ell}(G) \cdot m^{\ell}.$$
\end{prop}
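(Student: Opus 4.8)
The plan is to count $\ell$-cliques in $G^{(m)}$ by establishing a bijection between them and pairs consisting of an $\ell$-clique in $G$ together with a choice of one vertex from each of the $\ell$ replacement sets. The key structural observation is that within a single replacement set $V_x$ the $m$ vertices are mutually independent (no edges), so no two vertices lying in the same $V_x$ can belong to a common clique. Consequently any clique in $G^{(m)}$ meets each set $V_x$ in at most one vertex.

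First I would fix an $\ell$-clique $C = \{u_1, \dots, u_\ell\}$ in $G^{(m)}$ and, for each $u_i$, record the unique vertex $x_i$ of $G$ whose replacement set contains $u_i$. By the independence remark above, the $x_i$ are pairwise distinct. Next I would verify that $\{x_1, \dots, x_\ell\}$ forms an $\ell$-clique in $G$: for $i \neq j$, the vertices $u_i \in V_{x_i}$ and $u_j \in V_{x_j}$ are adjacent in $G^{(m)}$, and by the definition of the blow-up an edge between distinct replacement sets exists only when $\{x_i, x_j\}$ is an edge of $G$ (since each edge of $G$ is replaced by the complete bipartite graph $K_{m,m}$). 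Hence the projection $\pi$ sending $C$ to $\{x_1,\dots,x_\ell\}$ lands in the set of $\ell$-cliques of $G$.

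Conversely, starting from an $\ell$-clique $\{x_1, \dots, x_\ell\}$ in $G$ and any choice of vertices $u_i \in V_{x_i}$, I would check that $\{u_1, \dots, u_\ell\}$ is an $\ell$-clique of $G^{(m)}$: each pair $u_i, u_j$ with $i \neq j$ lies in distinct replacement sets joined by a complete bipartite graph (because $\{x_i, x_j\}$ is an edge of $G$), so $u_i$ and $u_j$ are adjacent. This shows the fibre of $\pi$ over a fixed $\ell$-clique of $G$ is in bijection with the product $V_{x_1} \times \cdots \times V_{x_\ell}$, which has exactly $m^\ell$ elements since each $|V_{x_i}| = m$. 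Summing the fibre sizes over all $\mathcal{K}_\ell(G)$ cliques of $G$ yields $\mathcal{K}_\ell(G^{(m)}) = \mathcal{K}_\ell(G)\cdot m^\ell$.

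The only delicate point, and the step I would state most carefully, is the claim that edges of $G^{(m)}$ occur \emph{only} between replacement sets corresponding to edges of $G$ and \emph{never} within a single $V_x$; everything else is routine bookkeeping. This is exactly the content of the definition of the balanced blow-up (or equivalently of the isomorphism $G^{(m)} \simeq G \otimes \mathring{K}_m$ in \eqref{blowup}), so the argument amounts to unwinding that definition and reading off the adjacency structure. I expect no genuine obstacle here, only the need to handle the degenerate cases cleanly: when $\ell = 1$ the formula gives $m$ vertices per vertex of $G$, and when $\mathcal{K}_\ell(G) = 0$ both sides vanish, consistent with \eqref{clique numb}.
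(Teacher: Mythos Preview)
Your proof is correct and follows essentially the same approach as the paper: both argue that vertices of an $\ell$-clique in $G^{(m)}$ must lie in distinct replacement sets (since each $V_x$ is independent), project to an $\ell$-clique in $G$, and then count the $m^\ell$ lifts of each such clique. Your version is simply a more detailed and carefully worded rendering of the paper's argument, which invokes the same projection/lifting idea and the multiplication principle in a terser form.
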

\begin{proof}
	Notice that any clique of size $\ell $ of $G^{(m)}$ induces a clique of the same size in $G$. Indeed the vertices of this clique in $G^{(m)}$ must belong to different 
	independent sets of the blow-up, since if two of them belong to the same independent set, so they are not neighbors.
	Conversely, given a clique $K_{\ell}$ in $G$, by blowing up of its vertices we can obtain 
	$|m|^{\ell}$ distinct cliques in $G^{(m)}$. 
	The above argument shows that all of the cliques in $G^{(m)}$ can be obtained in this way.
	Hence, by the so called multiplication principle of combinatorics we have that
		$$\mathcal{K}_{\ell}(G^{(m)})=\mathcal{K}_{\ell}(G) \cdot m^{\ell},$$
	as asserted.
\end{proof}

\begin{exam}
For $\ell,m,n\in \N$ we have that 
$$\mathcal{K}_{\ell}(K_{n}^{(m)})= \tbinom{n}{\ell}m^{\ell}.$$
Indeed, by taking into account that any choice of $\ell$ vertices in $K_n$ determines a clique in $K_n$ 
and all of the cliques in $K_n$ are determined uniquely in this way, 
we have that $\mathcal{K}_{\ell}(K_n)=\binom{n}{\ell}$. 
The assertion follows immediately from the above proposition.
\end{exam}

As a consequence of the previous proposition we obtain the following.
\begin{coro}\label{coro clique}
		Let $G$ be a simple undirected graph. 
		Then, $\omega(G^{(m)})=\omega(G)$ for any $m\in \N$.
\end{coro}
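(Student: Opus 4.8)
The plan is to derive Corollary \ref{coro clique} directly from Proposition \ref{blow clique}, using the characterization of the clique number given in \eqref{clique numb}. The key observation is that the clique number is determined entirely by which $\mathcal{K}_{\ell}$ vanish and which do not, and that the multiplicative factor $m^{\ell}$ in Proposition \ref{blow clique} is strictly positive for every $m,\ell \in \N$. Hence blowing up cannot create a clique size that was absent in $G$, nor can it destroy one that was present.

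More precisely, I would argue as follows. By Proposition \ref{blow clique} we have $\mathcal{K}_{\ell}(G^{(m)}) = \mathcal{K}_{\ell}(G)\cdot m^{\ell}$ for every $\ell \in \N$. Since $m \geq 1$, the factor $m^{\ell}$ is a positive integer, so $\mathcal{K}_{\ell}(G^{(m)}) \neq 0$ if and only if $\mathcal{K}_{\ell}(G) \neq 0$. Consequently the two sets $\{\ell \in \N : \mathcal{K}_{\ell}(G^{(m)}) \neq 0\}$ and $\{\ell \in \N : \mathcal{K}_{\ell}(G) \neq 0\}$ coincide, and therefore their maxima agree. Invoking \eqref{clique numb}, which expresses $\omega$ as exactly this maximum, I would conclude
$$\omega(G^{(m)}) = \max\{\ell : \mathcal{K}_{\ell}(G^{(m)}) \neq 0\} = \max\{\ell : \mathcal{K}_{\ell}(G) \neq 0\} = \omega(G),$$
as claimed.

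There is essentially no obstacle here, which is appropriate for a corollary: the entire content is the equivalence $\mathcal{K}_{\ell}(G^{(m)}) \neq 0 \iff \mathcal{K}_{\ell}(G) \neq 0$, and the only point requiring the slightest care is ensuring $m^{\ell} > 0$, which holds because $m \in \N$ forces $m \geq 1$. The one thing I would double-check is the convention for $\N$ in this paper: if $\N$ were to include $0$, the statement would need $m \geq 1$ imposed separately, but since blow-ups of order $0$ are not meaningful (and $G^{(1)}=G$ is taken as the base case in the definition), the intended range is $m \geq 1$ and the argument goes through unchanged.
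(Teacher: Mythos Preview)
Your proof is correct and follows exactly the same route as the paper: use Proposition~\ref{blow clique} together with $m^{\ell}>0$ to obtain $\mathcal{K}_{\ell}(G^{(m)})\neq 0 \iff \mathcal{K}_{\ell}(G)\neq 0$, and then invoke~\eqref{clique numb}. The paper's proof is just a two-line version of what you wrote.
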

\begin{proof}
	By Proposition \ref{blow clique} we have that $\mathcal{K}_{\ell}(G^{(m)})=0$ if and only if $\mathcal{K}_{\ell}(G)=0$.
	The corollary follows directly from \eqref{clique numb}.
\end{proof}

\subsubsection*{The structure of $G_R(k)$ and the number $\mathcal{K}_{\ell}(G_R(k))$ for $R$ local}
We recall some structural properties of the graphs $G_R(k)$ defined in \eqref{GRk}
for $R$ a finite commutative local ring $(R,\frak m)$ with identity, where $k$ is coprime with $|R|$. 

\begin{thm}[\cite{PV9}] \label{Local case}
 Let $(R,\frak{m})$ be a finite commutative local ring with $m=|\frak{m}|$ and residue field $R/\frak{m} \simeq \ff_q$. If $k\in \mathbb{N}$ satisfies $(k,|R|)=1$, then
\begin{equation} \label{Local form G curv}
G_R(k) \simeq {\G(k,q)}^{(m)} \simeq G_{\ff_q}(k) \otimes \mathring{K}_{m} ,  
\end{equation} 
where ${\G(k,q)}^{(m)}$ denotes the balanced blow-up of order $m$ of $\G(k,q)$, whose independent sets are all the cosets of $\frak m$ in $R$, and $\mathring{K}_{m}$ is the complete graph of $m$ vertices with a loop added at every vertex.
In particular, $G_R(k)$ is $\frac{m(q-1)}{k}$-regular.
\end{thm}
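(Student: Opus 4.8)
The plan is to establish the isomorphism
\begin{equation*}
G_R(k) \simeq \G(k,q)^{(m)} \simeq G_{\ff_q}(k) \otimes \mathring{K}_m
\end{equation*}
by exhibiting an explicit structure on the vertex set $R$ and verifying that the adjacency relation of $G_R(k)$ matches that of the blow-up. First I would recall the standard structure theory of a finite commutative local ring $(R,\frak m)$ with identity: the residue field $R/\frak m \simeq \ff_q$, and the reduction map $\pi : R \to R/\frak m$ is a surjective ring homomorphism whose fibers are exactly the cosets of $\frak m$, each of cardinality $m = |\frak m|$. This partitions $R$ into $q$ cosets, giving the candidate independent sets of the blow-up. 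The key arithmetic input, which I would use as the heart of the argument, is that since $(k,|R|)=1$ the $k$-th power map behaves compatibly with reduction: the condition $x \in U_R(k)$ should reduce to $\pi(x) \in U_k$ in $\ff_q^*$, and conversely every $k$-th power class in the field lifts. Concretely, I would show that $x \in R^*$ if and only if $\pi(x) \in \ff_q^*$ (a unit is precisely an element not in the maximal ideal), and that $U_R(k) = \pi^{-1}(U_k) \cap R^*$, i.e. the connection set $U_R(k)$ is the full preimage under $\pi$ of the connection set $U_k$ of $\G(k,q)$, intersected with the units.

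With this description of the connection set in hand, the second step is to translate adjacency. Two distinct vertices $v, w \in R$ are adjacent in $G_R(k)$ precisely when $w - v \in U_R(k)$. I would distinguish two cases according to whether $v$ and $w$ lie in the same coset of $\frak m$ or in different cosets. If $\pi(v) = \pi(w)$, then $w - v \in \frak m$, so $w - v$ is not a unit and hence not in $U_R(k)$; thus vertices within a single coset are never adjacent, matching the requirement that each $V_x$ be an independent set. If $\pi(v) \neq \pi(w)$, then $w - v$ is a unit, and by the preimage description $w - v \in U_R(k)$ if and only if $\pi(w) - \pi(v) = \pi(w-v) \in U_k$, that is, if and only if $\pi(v)$ and $\pi(w)$ are adjacent in $\G(k,q)$. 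This is exactly the blow-up adjacency rule: vertices in distinct cosets are joined iff the corresponding field elements are joined, giving a complete bipartite $K_{m,m}$ between the cosets over each edge of $\G(k,q)$. The second isomorphism in \eqref{Local form G curv} is then just the reformulation \eqref{blowup} of the blow-up as a tensor (categorical) product with $\mathring{K}_m$.

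The main obstacle I anticipate is the arithmetic claim that $U_R(k) = \pi^{-1}(U_k) \cap R^*$, i.e.\ that $k$-th power residues are detected by reduction modulo $\frak m$ when $(k,|R|)=1$. The nontrivial direction is surjectivity onto the field classes together with the fact that no spurious new adjacencies arise: given $u \in R^*$ with $\pi(u) \in U_k$, one must produce $x \in R^*$ with $x^k = u$, which amounts to lifting a $k$-th root from $\ff_q$ to $R$. I expect this to follow from a Hensel-type lifting argument, using that the formal derivative of $X^k - u$ is $k X^{k-1}$, which is a unit whenever $x$ is a unit because $(k,|R|)=1$ forces $k \in R^*$; the units $R^*$ form a group of order coprime appropriately to $k$, so the $k$-th power map on $R^*$ is compatible with the power map on $\ff_q^*$ under $\pi$. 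Once this lifting is secured, the counting of cosets gives $m = |\frak m|$ independent sets of equal size and the adjacency analysis completes the isomorphism. Finally, the regularity statement is immediate: by Proposition \ref{blow clique}'s underlying combinatorics (or directly), each vertex has degree $m \cdot \tfrac{q-1}{k} = \tfrac{m(q-1)}{k}$, since $\G(k,q)$ is $\tfrac{q-1}{k}$-regular and each neighbour in the field blows up to an independent set of size $m$.
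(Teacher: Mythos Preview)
The paper does not supply a proof of this theorem: it is quoted as a result from \cite{PV9}, and no proof environment follows the statement. So there is nothing in the present paper to compare your argument against directly.

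That said, your proposal is correct and is the natural (and presumably the original) argument. The reduction map $\pi:R\to R/\frak m\simeq\ff_q$ partitions $R$ into the $q$ cosets of $\frak m$, each of size $m$; an element of $R$ is a unit iff its image is nonzero; and the crucial step $U_R(k)=\pi^{-1}(U_k)$ follows exactly as you outline. In fact the intersection with $R^*$ in your formulation is redundant since $U_k\subseteq\ff_q^*$ already forces $\pi^{-1}(U_k)\subseteq R^*$. For the lifting you may use Hensel as you suggest, or argue more directly: the kernel $1+\frak m$ of $\pi|_{R^*}$ is a $p$-group (since $|R|$ is a $p$-power), and $(k,|R|)=1$ gives $(k,p)=1$, so the $k$-th power map is a bijection on $1+\frak m$; hence any unit whose reduction is a $k$-th power is itself a $k$-th power. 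The adjacency analysis and the regularity count then go through exactly as you describe.
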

\begin{rem}
Recently, Liu and Zhou \cite{LZ2} defined and studied the \textit{quadratic unitary Cayley graphs} 
$\mathcal{G}_R = Cay(R,T_{R})$,
where $T_R=Q_{R}\cup (-Q_R)$ and $Q_{R}=\{x^2: x\in R^*\}$ with $R$ a finite commutative ring with identity. 
Under the same conditions of the above theorem, they obtained the same decomposition, with a different technique, 
by using group theory. In its case they do not show that the independent sets of the blow-up are all the cosets of $\frak m$ in $R$ 
(see Theorems 2.3 and 2.5 in \cite{LZ2}). 
The same decomposition was shown by de Beaudrap \cite{Be} in the case $R=\mathbb{Z}_n$ and $k=2$.
\end{rem}



Recall that an integer $n$ is a \textit{primitive divisor} of $p^r-1$ if $n\mid p^r-1$ and $n\nmid p^{a}-1$ for all $1\le a<r$. 
For simplicity, as in our previous works \cite{PV7} we denote this fact by  
	\begin{equation} \label{prim div}
		n\dagger p^r-1.
	\end{equation} 
Also, it is well-known that 
	\begin{equation} \label{con cond}
	\G(k,q) \text{ is connected} \quad \Leftrightarrow \quad n \dagger q-1,
	\end{equation}
where $n$ is the regularity degree of $\G(k,q)$, that is 
	$$n=\tfrac{q-1}{k'} \quad \text{ where } \quad k'=(k,q-1).$$

\begin{coro}[\cite{PV9}] \label{-1RFq}
Let $(R,\frak{m})$ be a finite commutative local ring with $m=|\frak{m}|$ and residue field $R/\frak{m} \simeq \ff_q$. 
Let $k\in \mathbb{N}$ such that $(k,|R|)=1$. 
Then, 

\begin{enumerate}[$(a)$]
	\item $-1\in U_R(k)$ if and only if $-1\in U_{R/\frak m,k}$. In particular, $-1 \in U_R(k)$ if and only if $q$ is even or else if $q$ odd and $(k,q-1) \mid \frac{q-1}{2}$. \sk
	
	\item $G_R(k)$ is undirected if and only if $G_{R/\frak m}(k)$ is undirected. \sk 

\nopagebreak 	
	\item $G_{R}(k)$ is connected if and only if $\frac{q-1}{(k,q-1)} \dagger q-1$.
\end{enumerate}
\end{coro}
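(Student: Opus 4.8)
The plan is to treat the three items in order, deriving (b) and (c) from (a) and from the blow-up description in Theorem \ref{Local case}. The engine for (a) is the reduction map $\pi\colon R\to R/\frak m\simeq\ff_q$, which restricts to a surjective group homomorphism $R^*\twoheadrightarrow(R/\frak m)^*$ whose kernel is the group of principal units $1+\frak m$ of order $|\frak m|=m$. Here I will use the hypothesis $(k,|R|)=1$ in the sharper form $(k,q)=(k,m)=1$, which is legitimate since $|R|=qm$. Because $\pi$ is a ring map it satisfies $\pi(U_R(k))=U_{R/\frak m,k}$ and $\pi(-1)=-1$, so the forward implication of (a) is immediate: if $-1=x^k$ with $x\in R^*$, then $-1=\pi(x)^k$ with $\pi(x)\in(R/\frak m)^*$, whence $-1\in U_{R/\frak m,k}$.

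For the converse in (a) I would lift. Suppose $-1\in U_{R/\frak m,k}=\pi(U_R(k))$, so there is $z\in U_R(k)$ with $\pi(z)=-1$. Set $w=-z^{-1}$; then $\pi(w)=1$, i.e.\ $w\in 1+\frak m$. Since $|1+\frak m|=m$ and $(k,m)=1$, the $k$-th power map is an automorphism of the group $1+\frak m$, so $w=v^k$ for some $v\in 1+\frak m$. Writing $z=u^k$ with $u\in R^*$ then gives $-1=zw=(uv)^k\in U_R(k)$, as desired. The ``in particular'' clause is the classical statement for $\ff_q$: $U_{R/\frak m,k}$ is the unique subgroup of $\ff_q^*$ of order $\tfrac{q-1}{(k,q-1)}$, and $-1$ (which equals $1$ when $q$ is even, and is the unique involution $g^{(q-1)/2}$ when $q$ is odd with $g$ a generator) belongs to it precisely when $(k,q-1)\mid\tfrac{q-1}2$.

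Item (b) follows formally from (a): $U_R(k)$ is the image of the $k$-th power endomorphism of the abelian group $R^*$, hence a subgroup, so the symmetry condition $-U_R(k)=U_R(k)$ (which by the Preliminaries is exactly what makes $G_R(k)$ undirected) is equivalent to $-1\in U_R(k)$; the same equivalence holds over $R/\frak m$, and (a) identifies the two conditions. For (c) I would invoke Theorem \ref{Local case}, giving $G_R(k)\simeq\G(k,q)^{(m)}$, together with the elementary fact that for a graph $G$ with at least one edge the balanced blow-up $G^{(m)}$ is connected if and only if $G$ is; since $q\ge 2$ the graph $\G(k,q)$ is regular of positive degree and hence not edgeless, so $G_R(k)$ is connected iff $\G(k,q)$ is, which by \eqref{con cond} occurs exactly when $\tfrac{q-1}{(k,q-1)}\dagger q-1$. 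The only genuinely delicate point is the backward direction of (a): one must exhibit an honest $k$-th root of $-1$ in $R$ rather than merely modulo $\frak m$, and this is precisely where $(k,m)=1$ is needed, to invert the $k$-th power map on the principal units.
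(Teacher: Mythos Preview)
The paper does not supply its own proof of this corollary: it is stated with the citation \cite{PV9} and no \texttt{proof} environment follows, so there is nothing in the present paper to compare your argument against line by line. That said, your proposal is a correct, self-contained proof.

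Your treatment of (a) is the standard one: the reduction map $\pi\colon R^*\twoheadrightarrow (R/\frak m)^*$ carries $k$-th powers to $k$-th powers, giving the forward direction, and for the converse you correctly exploit that the kernel $1+\frak m$ has order $m=|\frak m|$ coprime to $k$, so the $k$-th power map is bijective there and a preimage $z\in U_R(k)$ of $-1$ can be corrected by a $k$-th root of $-z^{-1}\in 1+\frak m$ to an honest $k$-th root of $-1$ in $R^*$. Item (b) is, as you say, immediate from (a) since $U_R(k)$ is a subgroup. For (c) you invoke Theorem~\ref{Local case} and the easy fact that a balanced blow-up $G^{(m)}$ of a graph with at least one edge is connected iff $G$ is; combined with \eqref{con cond} this gives the primitive-divisor criterion. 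The only edge case to keep in mind is that $\G(k,q)$ indeed has positive (out-)degree $(q-1)/(k,q-1)\ge 1$ for every $q\ge 2$, and that in the possibly directed setting vertex-transitivity of Cayley graphs guarantees each vertex has both incoming and outgoing arcs, so your blow-up connectedness claim goes through unchanged. This is almost certainly the same argument as in \cite{PV9}, since part (c) there is also derived from the decomposition in Theorem~\ref{Local case}.
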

\begin{rem}
	Items ($a$) and ($b$) of the above corollary, imply that
	if $(R,\frak{m})$ is a finite local ring with $|R/\frak{m}|=q$, then 
	$$G_R(k) \text{ is undirected}\quad \Leftrightarrow \quad  \text{ $q$ even or else $(k,q-1)\mid \tfrac{q-1}{2}$ with $q$ odd}.$$
	Hence, this arythmetic condition will appear many times in the rest of the work.
\end{rem}

As a direct consequence of Proposition \ref{blow clique} and Theorem \ref{Local case} we obtain the following result.

\begin{thm}\label{Main teo}
	Let $(R,\frak{m})$ be a finite commutative local ring with $m=|\frak{m}|$ and residue field $R/\frak{m} \simeq \ff_q$.
	Let $k\in \mathbb{N}$ such that $(k,q-1) \mid \frac{q-1}{2}$.
 	If $(k,q)=1$, then
 		\begin{equation}
 				\mathcal{K}_{\ell}(G_R(k))=\mathcal{K}_{\ell}(\G(k,q))\cdot m^{\ell} \quad \text{for all $\ell\in \mathbb{N}$}.
 		\end{equation}
\end{thm}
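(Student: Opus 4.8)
The plan is to combine the structural decomposition of Theorem \ref{Local case} with the clique-counting formula for blow-ups from Proposition \ref{blow clique}. The only subtlety is that Proposition \ref{blow clique} is stated for \emph{simple undirected} graphs, so before applying it I must check that $\G(k,q)$ is of this type under the stated hypotheses; everything else is a direct substitution.

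First I would observe that the hypothesis $(k,q)=1$ is in fact equivalent to the coprimality condition $(k,|R|)=1$ required by Theorem \ref{Local case}. Indeed, since $R$ is a finite commutative local ring with residue field $R/\frak m \simeq \ff_q$ of characteristic $p$ (where $q=p^r$), the order $|R|$ is a power of $p$; hence $|R|$ and $q$ share exactly the prime divisor $p$, so $(k,q)=1 \Leftrightarrow (k,|R|)=1$. This legitimizes invoking Theorem \ref{Local case}, which gives the isomorphism $G_R(k) \simeq \G(k,q)^{(m)}$ with $m=|\frak m|$.

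Next I would verify that $\G(k,q)$ is simple and undirected. It is simple because $0 \notin U_k$ by definition \eqref{Gammas}, so there are no loops. For undirectedness it suffices that $U_k$ be symmetric, i.e. that $-1 \in U_k$; and by Corollary \ref{-1RFq}$(a)$ (equivalently, the arithmetic criterion recorded in the remark following it) this is precisely what the hypothesis $(k,q-1) \mid \tfrac{q-1}{2}$ guarantees. Thus $\G(k,q)$ falls within the scope of Proposition \ref{blow clique}.

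Finally, applying Proposition \ref{blow clique} with $G = \G(k,q)$ yields
$$\mathcal{K}_{\ell}(\G(k,q)^{(m)}) = \mathcal{K}_{\ell}(\G(k,q)) \cdot m^{\ell}$$
for every $\ell \in \N$, and composing this with the graph isomorphism of Theorem \ref{Local case} (which preserves the number of $\ell$-cliques) delivers the claimed identity $\mathcal{K}_{\ell}(G_R(k)) = \mathcal{K}_{\ell}(\G(k,q)) \cdot m^{\ell}$. The derivation is essentially immediate once the two ingredients are assembled; the main---indeed the only---point requiring care is checking that the arithmetic hypothesis forces $\G(k,q)$ to be undirected, since that is exactly what authorizes the application of Proposition \ref{blow clique}.
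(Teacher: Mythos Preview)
Your proof is correct and follows essentially the same route as the paper: verify that the arithmetic hypothesis makes $\G(k,q)$ undirected (via Corollary~\ref{-1RFq}), invoke Theorem~\ref{Local case} to get $G_R(k)\simeq \G(k,q)^{(m)}$, and then apply Proposition~\ref{blow clique}. Your additional observation that $(k,q)=1$ is equivalent to $(k,|R|)=1$ because $|R|$ is a $p$-power is a helpful clarification that the paper's proof glosses over.
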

\begin{proof}
	The hypothesis assure that $G_{\ff_{q}}(k)$ is an undirected graph, and by the above corollary $G_{R}(k)$ is undirected, as well.
	On the other hand, since $(k,q)=1$ then by Theorem \ref{Local case} we have that
	$$G_{R}(k)= \G(k,q)^{(m)}$$
	where $m$ is the size of $\frak{m}$. 
	Hence, the assertion it follows directly from Proposition \ref{blow clique}. 
\end{proof}
%

By taking into account the corollary \ref{coro clique},
Theorem \ref{Local case} implies the following consequence.

\begin{prop}\label{prop clique R}
	Let $(R,\frak{m})$ be a finite commutative local ring with $m=|\frak{m}|$ and residue field $R/\frak{m} \simeq \ff_q$.
	Let $k\in \mathbb{N}$ such that $(k,q-1) \mid \frac{q-1}{2}$.
	If $(k,q)=1$, then
	\begin{equation*}
		\omega(G_R(k))=\omega(\G(k,q)).
	\end{equation*}
\end{prop}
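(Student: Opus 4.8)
The plan is to deduce the statement directly from the two structural results already at hand: Theorem \ref{Local case}, which exhibits $G_R(k)$ as a balanced blow-up of $\G(k,q)$, and Corollary \ref{coro clique}, which asserts that such a blow-up leaves the clique number unchanged. The only preliminary care needed is to confirm that the hypotheses under which these results were stated are met here, in particular that the graphs involved are genuinely simple undirected graphs.

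First I would check that the two coprimality conditions coincide. Since the residue field is $\ff_q$ with $q=p^r$, the cardinality of the local ring $R$ is itself a power of the same prime $p$ (indeed $|R|=m\,q$, and each successive quotient $\frak m^{i}/\frak m^{i+1}$ is a vector space over $\ff_q$). Hence $(k,q)=1$ is equivalent to $(k,p)=1$, which is in turn equivalent to $(k,|R|)=1$, so the hypothesis of Theorem \ref{Local case} is satisfied.

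Next I would verify undirectedness. The assumption $(k,q-1)\mid\frac{q-1}{2}$ places us exactly in the arithmetic regime of Corollary \ref{-1RFq}$(a)$, so that $-1\in U_R(k)$ and $-1\in U_{R/\frak m,k}$; consequently both $\G(k,q)=G_{\ff_q}(k)$ and $G_R(k)$ are undirected by part $(b)$ of that corollary. In particular $\G(k,q)$ is a simple undirected graph, which is precisely the setting required to invoke Corollary \ref{coro clique}.

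With these points settled, the conclusion is immediate: by Theorem \ref{Local case} we have $G_R(k)\simeq\G(k,q)^{(m)}$ with $m=|\frak m|$, and then Corollary \ref{coro clique} applied to $G=\G(k,q)$ gives $\omega(G_R(k))=\omega(\G(k,q)^{(m)})=\omega(\G(k,q))$. There is no real obstacle in this argument; the whole content has been absorbed into the earlier results, and the only thing one must be scrupulous about is matching the hypotheses --- specifically confirming that the graphs are simple and undirected so that Corollary \ref{coro clique} legitimately applies. Alternatively, one could bypass Corollary \ref{coro clique} and argue from Theorem \ref{Main teo} directly: since $\mathcal{K}_\ell(G_R(k))=\mathcal{K}_\ell(\G(k,q))\cdot m^\ell$ for all $\ell\in\N$, the two quantities vanish for exactly the same values of $\ell$, and the equality of clique numbers then follows from \eqref{clique numb}.
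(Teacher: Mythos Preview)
Your proposal is correct and follows exactly the route the paper indicates: the paper simply records that the proposition is a consequence of Theorem~\ref{Local case} together with Corollary~\ref{coro clique}, and your argument spells this out (with the additional care of checking undirectedness via Corollary~\ref{-1RFq} and matching the coprimality hypotheses). The alternative you mention via Theorem~\ref{Main teo} and \eqref{clique numb} is also fine and amounts to the same thing, since Corollary~\ref{coro clique} was itself derived from Proposition~\ref{blow clique} in precisely that manner.
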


\section{ The number $\mathcal{K}_{3}(G_{R}(k))$ for $k$ small.} \label{sec 4}

In this section, we exploit some known values of $\mathcal{K}_{\ell}(\G(k,q))$ in order to obtain formulas for $R$ general local rings.
In \cite{DM}, as we mentioned in the preliminaries the authors obtained a general formula for $\mathcal{K}_{\ell}(\G(k,q))$ which is complicated to deal with, 
but for some cases ($k$ and $\ell$ small) these formulas become more tractable. 

We begin with the cases $\ell=3$ and $k=2,3,4$.
\begin{thm}\label{prop k small l 3}
 Let $(R,\frak{m})$ be a finite commutative local ring with $m=|\frak{m}|=q^{\beta}$ and residue field $R/\frak{m} \simeq \ff_q$.
 Let $k\in \N$ such that $k \mid \frac{q-1}{2}$ if $q$ is odd or else $k\mid q-1$ for $q$ even.
 Then, 
we have the following cases:
 \begin{enumerate}[($a$)]
 	\item ($k=2$)If $q\equiv 1 \pmod{4}$ then 
 		$$\mathcal{K}_{3}(G_{R}(2))= \frac{q^{3\beta+1}(q-1)(q-5)}{48}$$
	\item ($k=3$) If $q=p^r$ for a prime $p$, such that $3\mid q- 1$ if $q$ is even,
	or else $6\mid q-1$ if $q$ is odd. When $p\equiv 1 \pmod{3}$, 
	write $4q= c^2 + 27 d^2$
	for $c,d \in \Z$ such that $c\equiv 1 \pmod{3}$ and $p\nmid c$. 
	When $p\equiv 2 \pmod{3}$, let $c = -2(-p)^{\frac{r}{2}}$. Then
	 	$$\mathcal{K}_{3}(G_{R}(3))= \frac{q^{3\beta+1}(q-1)(q+c-8)}{162}.$$
	\item ($k=4$) Let $q=p^r\equiv 1\pmod{8}$ for a prime $p$. 
	Write $q=e^2+4f^2$ for $e,f\in\Z$, such that $e\equiv 1\pmod{4}$, and $p\nmid e$ when $p\equiv 1\pmod{4}$. 
	Then 		
		 	$$\mathcal{K}_{3}(G_{R}(4))= \frac{q^{3\beta+1}(q-1)(q-6e-11)}{2^7\cdot 3}.$$
 \end{enumerate}
\end{thm}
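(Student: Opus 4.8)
The plan is to realize each of the three cases as a direct specialization of Theorem~\ref{Main teo}, reducing the computation over the local ring $R$ to the known triangle counts of the corresponding generalized Paley graph $\G(k,q)$ taken from \cite{DM}. Since here $\ell=3$ and $m=|\frak{m}|=q^{\beta}$, Theorem~\ref{Main teo} gives
$$\mathcal{K}_3(G_R(k))=\mathcal{K}_3(\G(k,q))\cdot m^3=\mathcal{K}_3(\G(k,q))\cdot q^{3\beta},$$
so the whole argument amounts to: (i) checking that the stated hypotheses in each item imply the two conditions $(k,q-1)\mid\frac{q-1}{2}$ and $(k,q)=1$ required by Theorem~\ref{Main teo}; and (ii) inserting the correct closed formula for $\mathcal{K}_3(\G(k,q))$.

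First I would verify the divisibility and coprimality conditions case by case. In item $(a)$, $q\equiv1\pmod4$ forces $q$ odd, hence $(2,q)=1$, while $(2,q-1)=2$ together with $4\mid q-1$ gives $2\mid\frac{q-1}{2}$. In item $(b)$, the assumption $p\equiv1$ or $2\pmod3$ guarantees $p\neq3$, so $(3,q)=1$; moreover $3\mid q-1$ for $q$ even and $6\mid q-1$ for $q$ odd are exactly what is needed so that $(3,q-1)=3$ divides $\frac{q-1}{2}$ (the parity being automatic when $q$ is even). In item $(c)$, $q\equiv1\pmod8$ yields $q$ odd, $(4,q)=1$, and $8\mid q-1$, whence $(4,q-1)=4$ divides $\frac{q-1}{2}$. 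Thus Theorem~\ref{Main teo} applies in all three cases.

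Next I would substitute the explicit triangle counts for $\G(k,q)$ established in \cite{DM}, namely $\mathcal{K}_3(\G(2,q))=\frac{q(q-1)(q-5)}{48}$, then $\mathcal{K}_3(\G(3,q))=\frac{q(q-1)(q+c-8)}{162}$ with $c$ determined by $4q=c^2+27d^2$, and finally $\mathcal{K}_3(\G(4,q))=\frac{q(q-1)(q-6e-11)}{2^7\cdot3}$ with $e$ determined by $q=e^2+4f^2$, where $c,d,e,f$ carry exactly the normalizations ($c\equiv1\pmod3$; $e\equiv1\pmod4$; and the listed $p$-divisibility conditions) stated in the theorem. Multiplying each of these by $m^3=q^{3\beta}$ produces precisely the three displayed formulas, completing the proof.

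The bookkeeping is routine; the one point requiring genuine care---and which I expect to be the main obstacle---is ensuring that the sign and congruence normalizations imposed on $c,d$ and $e,f$ here coincide with those under which \cite{DM} derived the formulas, since $\mathcal{K}_3(\G(3,q))$ and $\mathcal{K}_3(\G(4,q))$ depend on the parameters $c$ and $e$ themselves. The blow-up step, by contrast, is exact and introduces no ambiguity.
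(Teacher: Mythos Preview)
Your proposal is correct and follows exactly the same approach as the paper: the paper's proof is the single sentence ``All of the assertion follows directly from Theorem~\ref{Main teo} and Corollaries 2.10, 2.11 and 2.12 from \cite{DM},'' and your argument is simply a fleshed-out version of this, explicitly verifying the hypotheses of Theorem~\ref{Main teo} in each case and then plugging in the cited formulas from \cite{DM}. Your additional care about the normalizations of $c$ and $e$ is prudent but not an issue, since the hypotheses in the statement are copied verbatim from the corresponding corollaries in \cite{DM}.
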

\begin{proof}
All of the assertion follows directly from Theorem \ref{Main teo} and Corollaries 2.10, 2.11 and 2.12 from \cite{DM}. 
\end{proof}
\begin{rem}
\begin{enumerate}[($a$)]
	\item In \cite{BB}, the authors found the same value in the item $(a)$, only when $R=\Z_{p^{\alpha}}$,
	by using some calculations in terms of Jacobi sums and Dirichlet characters.
	Notice that in this case its maximal ideal has size $p^{\alpha-1}$ and the residue field is $\ff_{p}$, 
	so $q=p$ and $\beta=\alpha-1$, 
	in terms of the notation of the above proposition. 
	Hence, for $p\equiv 1\pmod{4}$ we have that  
	$$\mathcal{K}_{3}(G_{\Z_{p^{\alpha}}}(2))= \frac{p^{3\alpha-2}(p-1)(p-5)}{48}.$$
	\item Notice that the hypothesis that the author assume in Corollary 2.12 of \cite{DM} was $q=e^2+f^2$
	instead of $q=e^2+4f^2$, both hypothesis are the same, since if we assume that 
	$q=e^2+f^2$ with $q\equiv 1 \pmod{8}$ and $e\equiv 1 \pmod{4}$, 
	then necessarily $f\equiv 0 \pmod{4}$, that is $f=2f'$ and so $q= e^2 +4 f'^2$. 
\end{enumerate}

\end{rem}

\subsubsection*{Derived values of $\mathcal{K}_{3}(\G(k,q))$ in field extensions for $k=3,4$.}

We can say more things about the cases $k=3,4$ when $R=\ff_q$ is a finite field. 
In \cite{PV8} Podest\'a and Videla studied the energy and spectra of $\G(3,q)$ and $\G(4,q)$,
in this case somo constants appear, very similar to the costants $c$ and $e$ present in the formulas for $\mathcal{K}_{3}(G_{\ff_{q}}(4))$.
By using complex numbers, they proved that the costants can be obtained recursively when $q$ grows (in some particular way).
This allow them to proved that the spectrum can be obtained recursively. 
In this case, we can do the same for the number of cliques, as the following results assert.

\begin{thm} \label{clique der 3 gen}
	Let $p$ be a prime with $p\equiv 1 \pmod{3}$.
	If there is a minimal $t\in \N$ such that 
	\begin{equation} \label{eq pt=x227y2}
		p^t=X^2+27 Y^2
	\end{equation}
	has integral solutions $x,y \in \Z$ with $(x,p)=1$, 
	then $\mathcal{K}_{3}(\G(3,p^{t\ell+s}))$, with $\ell\ge 1$ and $0\le s<t$, 
	is determined by the numbers $\mathcal{K}_{3}(\G(3,p^{t}))$ and $\mathcal{K}_{3}(\G(3,p^{s}))$.
\end{thm}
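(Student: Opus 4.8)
The plan is to reduce the statement to a purely arithmetic assertion about the constant $c=c(q)$ appearing in Theorem~\ref{prop k small l 3}$(b)$ and then to propagate it through field extensions by means of the Hasse--Davenport relation, exactly in the spirit of the spectral computations of Podest\'a and Videla in \cite{PV8}. First I would specialize Theorem~\ref{prop k small l 3}$(b)$ to the field case $m=1$ (so $\beta=0$), obtaining
\[ \mathcal{K}_{3}(\G(3,q))=\frac{q(q-1)(q+c-8)}{162}, \]
where $4q=c^2+27d^2$, $c\equiv 1\pmod 3$ and $p\nmid c$. Since for fixed $q$ this is an affine (hence injective) function of $c$, knowing $\mathcal{K}_{3}(\G(3,q))$ is equivalent to knowing $c(q)$. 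Thus it suffices to show that $c(p^{t\ell+s})$ is determined by $c(p^{t})$ and $c(p^{s})$.

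Next I would identify $c$ with a cubic Jacobi sum. Since $p\equiv 1\pmod 3$, for every $n$ the field $\ff_{p^n}$ carries a cubic character $\chi$, and for a fixed additive character the Jacobi sum $J_{p^n}:=J(\chi,\chi)$ lies in $\Z[\omega]$ and can be normalized so that $J_{p^n}=\tfrac{c(p^n)+3d(p^n)\sqrt{-3}}{2}$, whence $c(p^n)=2\,\mathrm{Re}(J_{p^n})$. The iterated Hasse--Davenport relation gives $J_{p^n}=(-1)^{n-1}\pi^{n}$ with $\pi:=J_{p}$, and a short computation with the exponents then yields the multiplicative identity
\[ J_{p^{t\ell+s}}=(-1)^{\ell}\,J_{p^{t}}^{\,\ell}\,J_{p^{s}}. \]
At this point the hypothesis enters: the minimality of $t$ with $p^{t}=X^2+27Y^2$ and $(X,p)=1$ is equivalent to $J_{p^{t}}=X+3Y\sqrt{-3}\in\Z[3\sqrt{-3}]$ with $p\nmid X$, i.e.\ $c(p^{t})=2X$ and $d(p^{t})=2Y$.

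Taking real parts gives $c(p^{t\ell+s})=2(-1)^{\ell}\,\mathrm{Re}\!\big(J_{p^{t}}^{\,\ell}J_{p^{s}}\big)$. Knowing $c(p^{t})$ determines $X$ and $|Y|$, hence $\mathrm{Re}(J_{p^{t}}^{\,\ell})$ (an even function of $Y$) exactly and $\mathrm{Im}(J_{p^{t}}^{\,\ell})$ up to sign; likewise $c(p^{s})$ determines $\mathrm{Re}(J_{p^{s}})$ exactly and $\mathrm{Im}(J_{p^{s}})$ up to sign. Hence $\mathrm{Re}(J_{p^{t}}^{\,\ell}J_{p^{s}})$ is pinned down except for the relative sign of the two imaginary parts, which leaves exactly two candidate values for $c(p^{t\ell+s})$.

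The main obstacle is to rule out the spurious candidate, and here I would use the normalization $p\nmid c$. The wrong sign corresponds to replacing $J_{p^{s}}$ by its conjugate, that is, to the element $J_{p^{t}}^{\,\ell}\overline{J_{p^{s}}}=(-1)^{s-1}p^{s}\,\pi^{t\ell-s}$, using $\pi^{s}\bar\pi^{s}=p^{s}$. This element is divisible by $p^{s}$, so for $s\ge 1$ the corresponding value of $c$ is divisible by $p$, whereas the true Jacobi sum $J_{p^{t\ell+s}}=(-1)^{t\ell+s-1}\pi^{t\ell+s}$ is coprime to $p$ and hence has $p\nmid c$. Therefore the condition $p\nmid c$ selects the correct candidate, so $c(p^{t\ell+s})$---equivalently $\mathcal{K}_{3}(\G(3,p^{t\ell+s}))$---is determined by $c(p^{t})$ and $c(p^{s})$; the case $s=0$ is immediate since then $J_{p^{s}}=1$ is real and no sign ambiguity arises. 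I expect the only delicate points to be the bookkeeping of units in the normalization of $J_{p^n}$ (to guarantee $c(p^n)=2\,\mathrm{Re}(J_{p^n})$ with $c\equiv 1\pmod 3$) and the verification of the equivalence between the hypothesis and $J_{p^{t}}\in\Z[3\sqrt{-3}]$.
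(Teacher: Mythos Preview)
Your argument is correct and is, at bottom, the same mechanism as the paper's: both exploit that the invariant $c(q)$ in Theorem~\ref{prop k small l 3}$(b)$ is $2\,\mathrm{Re}$ of an element of $\Z[\omega]$ of norm $q$, and then climb the tower $p^t,p^{2t},\dots$ by multiplication. The paper does this by hand, writing $z_{x,y}=x+3\sqrt{3}\,iy$ with $p^t=\|z_{x,y}\|^2$, taking powers, and deriving a two-term linear recurrence $c_{\ell+1,s}=2x_0c_{\ell,s}-p^tc_{\ell-1,s}$; it then quotes \cite{PV8} for the facts $c_{\ell,s}\equiv1\pmod3$ and $(c_{\ell,s},p)=1$ which pin $c_{\ell,s}$ down as the constant appearing in the clique formula. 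Your version replaces this with the Jacobi-sum identification and the Hasse--Davenport relation $J_{p^n}=(-1)^{n-1}J_p^{\,n}$, which is exactly the conceptual explanation for why those complex numbers multiply.

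What you gain is a cleaner treatment of the one point the paper leaves vague. The paper's final sentence (``$d_{0,s}$ can be put in terms of $c_{0,s}$ and $y_0$ in terms of $x_0$'') only determines $d_{0,s}$ and $y_0$ up to sign, and the closed formula for $c_{\ell,s}$ genuinely depends on the \emph{relative} sign. Your observation that the wrong relative sign forces $p\mid c$ while the true Jacobi sum has $p\nmid c$ is exactly what is needed to close this; the paper implicitly relies on the same fact via the citation to \cite{PV8}, but does not make the argument explicit. The delicate points you flag (unit normalizations so that $c(p^n)=2\,\mathrm{Re}(J_{p^n})$ with $c\equiv1\pmod3$, and matching the hypothesis $p^t=X^2+27Y^2$ with $J_{p^t}\in\Z+3\sqrt{-3}\,\Z$) are routine once one fixes the primary cubic character, but they do need to be written out carefully.
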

\begin{proof}
	Let $t$ be minimal in $\N$ such that \eqref{eq pt=x227y2} has an integral solution $x,y$ with $(x,p)=1$. Notice that if $(x,y)$ is a solution of \eqref{eq pt=x227y2} then $(x,-y)$ and $(-x, \pm y)$ are also solutions. Also, from \eqref{eq pt=x227y2} we have that $x^2\equiv 1 \pmod 3$ since $p\equiv 1 \pmod 3$ and hence $x\equiv \pm 1 \pmod 3$. Thus, we will choose one solution $(x_0,y_0)$, with $x_0 \in \{\pm x\}$ and $y_0 \in \{\pm y\}$, such that $x_0\equiv 1 \pmod 3$.
	
	Considering the complex number 
	\begin{equation} \label{zxy}
		z_{x,y}:=x + 3\sqrt{3} i y, 
	\end{equation}
	we have that $\| z_{x,y} \|^2 = x^2+27y^2 = p^t$ and hence 
	\begin{equation} \label{ptl}
		p^{t\ell} = \|z_{x,y}\|^{2 \ell} = \|z_{x,y}^{\ell}\|^2
	\end{equation}
	for any $\ell \in \N$. Now, we will express $z_{x,y}^\ell$ in the form given in \eqref{zxy}. 
	For any $\ell \in \N$ put 
	$$z_{x,y}^{\ell} := z_{x_{\ell-1},y_{\ell-1}} = x_{\ell-1}+ 3\sqrt{3}i y_{\ell-1}$$
	where $ z_{x,y}^1=z_{x,y}$ and $x_0=x$, $y_0=y$.
	For instance, $x_1+3\sqrt 3 i y_1 = z_{x,y}^2 = (x^2-27y^2)+3\sqrt 3 i (2xy)$ so $x_1=x^2-27y^2$ and $y_1=2xy$.
	By the relation $z_{x,y}^{\ell+1} = z_{x,y}z_{x,y}^\ell$, one sees that
	the sequence $\{(x_{\ell},y_{\ell})\}_{\ell \in \mathbb{N}_0}$ is thus recursively defined as follows: let $x_0=x$, $y_0=y$ and 
	for any $\ell>0$ take 
	\begin{equation} \label{recursion t1}
		x_{\ell} = x_0 x_{\ell-1} - 27 y_0 y_{\ell-1} \qquad \text{and} \qquad y_{\ell} = x_0 y_{\ell-1} + x_{\ell-1}y_0.
	\end{equation}
	Now, in the proof of Theorem 3.1 of \cite{PV8} the authors showed the following claim:
	
	\noindent
	\textit{Claim 1:} $x_{\ell} \equiv 1\pmod{3}$ and $(x_{\ell},p)=1$ for all $\ell\in \mathbb{N}_0$.
	
	\smallskip 
	
	Now assume that $s\in \{1,\ldots,t-1\}$ (the case $s=0$ was treated before), and let $c_{0,s},d_{0,s}\in \mathbb{Z}$ with $c_{0,s}\equiv 1\pmod{3}$ and $(c_{0,s},p)=1$ 
	such that 
	$$4p^s = c_{0,s}^2+27d_{0,s}^2 = \| z_{c_{0,s},d_{0,s}} \|^2$$
	with $z_{c_{0,s},d_{0,s}} = c_{0,s} + 3\sqrt{3} i d_{0,s}$. Hence, we have that
	$$4p^{t\ell+s} =\|z_{c_{0,s},d_{0,s}} \|^2 \|z_{x_{\ell-1}, y_{\ell-1}} \|^2 = \|z_{c_{0,s},d_{0,s}}  z_{x_{\ell-1},y_{\ell-1}} \|^2 = \|z_{c_{\ell,s},d_{\ell,s}} \|^2$$
	where $\{(c_{\ell,s},d_{\ell,s})\}_{\ell\in\mathbb{N}_0}$ also satisfies the recursions
	\begin{equation}\label{aies}
		c_{\ell,s}=c_{0,s} x_{\ell-1}-27 d_{0,s} y_{\ell-1} \qquad \text{and}\qquad d_{\ell,s}=c_{0,s} y_{\ell-1}+d_{0,s} x_{\ell-1},
	\end{equation}
	with $x_{\ell},y_{\ell}$ recursively defined as in \eqref{recursion t1}. The following claim is also proved in Theorem 3.1 of \cite{PV8}
	
	\noindent
	\textit{Claim 2:} $c_{\ell,s}\equiv 1\pmod{3}$ and $(c_{\ell,s},p)=1$ for all $\ell\in \mathbb{N}_0$.
	
	\smallskip
	In order to prove that the $\mathcal{K}_{3}(\G(3,p^{t\ell+s}))$ is determined by $\mathcal{K}_{3}(\G(3,p^{s}))$ and $\mathcal{K}_{3}(\G(3,p^{t}))$, 
	it is enough to put $c_{\ell,s}$ in terms of $c_{0,s}$ and $x_0$.
	In \cite{PV8} it is shown that $c_{\ell,s}$'s satisfy the following recursion 
	\begin{equation}\label{recursion imp a}
		c_{\ell+1,s}=2x_0 c_{\ell,s}-p^t c_{\ell-1,s}.
	\end{equation}
	By solving this two terms linear recurrence, we obtain that
	\begin{equation} 
		 c_{\ell,s} = \tfrac 12 (c_{0,s}+3\sqrt{3}d_{0,s} i) (x_0+3\sqrt{3}y_0 i)^{\ell} + 
			\tfrac{1}{2}(c_{0,s}-3\sqrt{3}d_{0,s} i)(x_0-3\sqrt{3}y_0 i)^{\ell}, 
	\end{equation}
	In this way, for every $\ell \in \N$, $c_{\ell,s}$ can be put in terms of $c_{0,s},d_{0,s}$ and $x_0,y_0$ only, 
	to finish the proof notice that $d_{0,s}$ can be put in terms of $c_{0,s}$ and $y_0$ can be put in terms of $x_0$, 
	as we wanted to show.
\end{proof}

Recall that an integer $a$ is a \textit{cubic residue} modulo a prime $p$ if $a\equiv x^3 \pmod p$ for some integer $x$. 
By Euler's criterion, $a$ is a cubic residue mod $p$, with $(a,p)=1$, if and only if 
\begin{equation} \label{ap cond}
	a^{\frac{p-1}{d}}\equiv 1 \pmod p
\end{equation}
where $d=(3,p-1)$. 
We have the following direct consequence of Theorem \ref{clique der 3 gen}.
\begin{thm} \label{res cubic}
	Let $p$ be a prime with $p\equiv 1 \pmod{3}$.
	If $2$ is a cubic residue modulo $p$, then the number $\mathcal{K}_{3}(\G(3,p))$ determines 
	the numbers $\mathcal{K}_{3}(\G(3,p^{\ell}))$ for every $\ell \in \N$. 
	In this case, $\mathcal{K}_{3}(\G(3,p^{\ell}))$ is given by
	\begin{equation} \label{spec p3l}
		\mathcal{K}_{3}(\G(3,p^{\ell})) = \frac{p^{\ell}(p^{\ell}-1)(p^{\ell}+c_{\ell}-8)}{162}.
	\end{equation}
	where $c_\ell$ are defined by
	\begin{equation*}
	 c_{\ell} = - (x_0+3\sqrt{3}y_0 i)^{\ell} -(x_0-3\sqrt{3}y_0 i)^{\ell}, 
	\end{equation*}
	where $x_0$ and $y_0$ are the solutions of $p=X^2+27 Y^2$ with $(x_0,p)=1$ and $x_0\equiv 1 \pmod{3}$.
\end{thm}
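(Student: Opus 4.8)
The plan is to show that the cubic-residue hypothesis forces the minimal exponent $t$ of Theorem \ref{clique der 3 gen} to be $t=1$, and then to read off the explicit formula by specializing the closed form for $c_{\ell,s}$ there to $s=0$. The essential number-theoretic input is the classical characterization of primes of the form $x^2+27y^2$: for a prime $p\equiv 1\pmod 3$, the equation $p=X^2+27Y^2$ has an integral solution if and only if $2$ is a cubic residue modulo $p$ (this goes back to Gauss; see Cox, \emph{Primes of the Form $x^2+ny^2$}). First I would invoke this to produce $x_0,y_0\in\Z$ with $p=x_0^2+27y_0^2$. Since $p\equiv 1\pmod 3$ gives $p\neq 3$, if $p\mid x_0$ then $p\mid 27y_0^2$ forces $p\mid y_0$ and hence $p^2\mid p$, a contradiction; thus $(x_0,p)=1$ holds automatically. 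As in the proof of Theorem \ref{clique der 3 gen}, replacing $x_0$ by $-x_0$ if necessary we may also arrange $x_0\equiv 1\pmod 3$. Because $t$ is a positive integer, exhibiting a solution at exponent $1$ shows the minimal $t$ equals $1$.

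With $t=1$ the only admissible residue in Theorem \ref{clique der 3 gen} is $s=0$, so that theorem immediately yields that $\mathcal{K}_3(\G(3,p^\ell))$ is determined by $\mathcal{K}_3(\G(3,p))$ for every $\ell\geq 1$ (the graph $\G(3,p^0)$ being trivial and contributing nothing). This proves the first assertion.

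For the explicit value, note that in the field case $R=\ff_{p^\ell}$ we have $m=1$, i.e.\ $\beta=0$, so Theorem \ref{prop k small l 3}$(b)$ reads
\[
  \mathcal{K}_3(\G(3,p^\ell)) = \frac{p^\ell(p^\ell-1)(p^\ell+c_\ell-8)}{162},
\]
where $c_\ell\equiv 1\pmod 3$, $p\nmid c_\ell$ and $4p^\ell=c_\ell^2+27d_\ell^2$; its hypotheses hold because $p\equiv 1\pmod 3$ together with $p$ odd gives $6\mid p^\ell-1$, i.e.\ $3\mid\tfrac{p^\ell-1}{2}$. It remains to identify $c_\ell$. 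For $s=0$ the normalized initial data of the $\{(c_{\ell,s},d_{\ell,s})\}$ recursion is the representation $4=(-2)^2+27\cdot 0^2$ with $-2\equiv 1\pmod 3$, so $c_{0,0}=-2$ and $d_{0,0}=0$. Substituting these into the closed-form solution of the recurrence \eqref{recursion imp a} displayed in the proof of Theorem \ref{clique der 3 gen} collapses the two prefactors $\tfrac12(c_{0,0}\pm 3\sqrt 3\,d_{0,0}i)$ to $-1$ and gives exactly
\[
  c_\ell = -(x_0+3\sqrt 3\,y_0 i)^\ell-(x_0-3\sqrt 3\,y_0 i)^\ell,
\]
as claimed. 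Equivalently $c_\ell=-2\,\mathrm{Re}\big((x_0+3\sqrt 3\,y_0 i)^\ell\big)=-2x_{\ell-1}$ in the notation of that proof, whence the required congruence $c_\ell\equiv 1\pmod 3$ and the coprimality $p\nmid c_\ell$ are inherited from Claim~1 there (using that $p$ is odd).

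The main obstacle is entirely in the first paragraph: the whole argument hinges on recognizing that ``$2$ is a cubic residue $\bmod\ p$'' is precisely the condition for $p$ itself to be represented by $x^2+27y^2$, which pins down $t=1$. Once $t=1$ is secured, the rest is a bookkeeping specialization of the two preceding theorems, the only delicate point being the verification that $(c_{0,0},d_{0,0})=(-2,0)$ is the correctly normalized solution of $4p^0=c^2+27d^2$ with $c\equiv 1\pmod 3$.
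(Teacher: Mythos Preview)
Your argument is correct and follows the same route as the paper: invoke the Gauss--Euler characterization \eqref{2res conds} to represent $p$ itself by $x^2+27y^2$, adjust the sign of $x_0$ so that $x_0\equiv 1\pmod 3$, and then apply Theorem~\ref{clique der 3 gen} with $t=1$ and $s=0$. Your write-up is in fact more detailed than the paper's, which simply asserts that ``the assertion follows directly from Theorem~\ref{clique der 3 gen} with $t=1$ and $s=0$''; you make explicit the identification $(c_{0,0},d_{0,0})=(-2,0)$ and the resulting specialization $c_\ell=-2x_{\ell-1}$, and you check $(x_0,p)=1$, all of which the paper leaves implicit.
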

\begin{proof}
A classic result in number theory, conjectured by Euler and first proved by Gauss using cubic reciprocity, asserts that 
(see for instance \cite{C})
\begin{equation} \label{2res conds}
	p=x^2+27 y^2 \quad \text{for some $x,y\in\mathbb{Z}$} \qquad \Leftrightarrow \qquad 
	\begin{cases}
		p\equiv 1\pmod{3} \quad \text{ and,} \\ 2 \text{ is a cubic residue modulo $p$}.
	\end{cases}
\end{equation}
By hypothesis we have that $p\equiv 1\pmod{3}$ and $2$ is a cubic residue modulo $p$, so there exist 
$x,y \in\mathbb{Z}$ such that $p=x^2+27 y^2$. Moreover, since either $x$ or $-x$ is congruent to $1$ mod $p$, we choose the solution $(z,y)$, where $z\in \{\pm x\}$ with $z\equiv 1 \pmod 3$.
Thus, the assertion follows directly from Theorem \ref{clique der 3 gen} with $t=1$ and $s=0$.
\end{proof}
\begin{exam}  
	Let $p=31$. We know that $2$ is a cubic residue modulo $31$ and in this case we have 
	$31= 2^2 +27\cdot 1^2$. 
	We take the solutions $x_0=-2$ and $y_0=1$ of $31=X^2+37Y^2$.
	By Theorem \ref{res cubic}, we have that
	$\mathcal{K}_{3}(\G(3,31))$ determines  $\mathcal{K}_{3}(\G(3,31^{\ell}))$ for every $\ell$ and
	\begin{equation*} 
			\mathcal{K}_{3}(\G(3,p^{\ell})) = \frac{p^{\ell}(p^{\ell}-1)(p^{\ell}+c_{\ell}-8)}{162}.
	\end{equation*}
	where $c_\ell$ satisfies
	\begin{equation*} 
		c_{\ell} = - (-2+3\sqrt{3} i)^{\ell} -(-2-3\sqrt{3} i)^{\ell}, 
	\end{equation*}
	In Table 1 we give the values of $\G(3,31^{\ell})$ for the first five values of $\ell$. 
	\begin{table}[h!]
		\caption{First values of $c_{\ell}$ and $\mathcal{K}_{3}(\G(3,31^{\ell}))$}
		\begin{tabular}{|c|c|c|c|}
			\hline
			$\ell$ & $c_{\ell}$  & Values of $\mathcal{K}_{3}(\G(3,31^{\ell}))$\\ \hline 
			1 & $4$  & $155$\\
			2 & $46$ & $5689120$\\
			3 & $-308$  & $161470943875$\\ 
			4 & $-194$  & $4861047204287040$\\
			5 & $10324$  & $144899484304503423275$\\
			\hline 
		\end{tabular}
	\end{table}
%
	\hfill $\lozenge$
\end{exam}

\begin{thm} \label{clique der 4}
	If $p$ is a prime with $q=p^{r}\equiv 1 \pmod{8}$  and $p\equiv 1 \pmod{4}$,
	then the number of cliques $\mathcal{K}_{3}(\G(4,q^{\ell}))$ is determined by 
	$\mathcal{K}_{3}(\G(4,q))$ for every $\ell \in \N$. 
	Moreover, $\mathcal{K}_{3}(\G(4,q^{\ell}))$ is given by
	$$\mathcal{K}_{3}(\G(4,q^{\ell}))= \frac{q^{\ell}(q^{\ell}-1)(q^{\ell}-6e_{\ell}-11)}{2^7\cdot 3}.$$
	where the number $e_\ell$ is given by 
	\begin{equation} \label{rec cl,dl}
		e_{\ell}= \tfrac 12 (e_1+2 f_1 i)^{\ell}+ \tfrac 12 (e_1-2 f_1 i)^{\ell}=\mathit{Re} (e_1+2 f_1 i)^{\ell}, 
	\end{equation}
	where $e_1$ and $f_1$ are integral solutions of $q=X^2+ 4Y^2$ with $e_1 \equiv 1 \pmod 4$ and $(e_1,p)=1$.
\end{thm}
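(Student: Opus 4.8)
The plan is to mirror the proof of Theorem \ref{clique der 3 gen}, replacing the cubic decomposition $X^2+27Y^2$ by the quartic one $X^2+4Y^2$ attached to the case $k=4$. First I would reduce the statement to a purely arithmetic one about the constants. Since $\ff_{q^\ell}$ is a field, its maximal ideal is $\frak m=(0)$, so the blow-up order is $m=1$ (that is, $\beta=0$); moreover $q\equiv 1\pmod 8$ forces $q^\ell\equiv 1\pmod 8$ and $p\equiv 1\pmod 4$ by hypothesis. Hence Theorem \ref{prop k small l 3}($c$) applies to $\ff_{q^\ell}$ and yields
$$\mathcal{K}_3(\G(4,q^\ell))=\frac{q^\ell(q^\ell-1)(q^\ell-6e_\ell-11)}{2^7\cdot 3},$$
where $e_\ell$ is the integer singled out by $q^\ell=e_\ell^2+4f_\ell^2$ together with $e_\ell\equiv 1\pmod 4$ and $p\nmid e_\ell$. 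Thus the whole theorem reduces to producing this $e_\ell$ from the data $(e_1,f_1)$ attached to $q$ and to checking the two congruence conditions that select it.

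Next I would introduce the complex number $z_1=e_1+2f_1 i$, so that $\|z_1\|^2=e_1^2+4f_1^2=q$ and therefore $q^\ell=\|z_1^\ell\|^2$ for all $\ell\in\N$. Writing $z_1^\ell=e_\ell+2f_\ell i$ and expanding $z_1^{\ell+1}=z_1 z_1^\ell$ gives the recursions
$$e_{\ell+1}=e_1 e_\ell-4f_1 f_\ell,\qquad f_{\ell+1}=e_1 f_\ell+f_1 e_\ell,$$
which show inductively that $e_\ell,f_\ell\in\Z$ and, in particular, that the imaginary part of $z_1^\ell$ is always even, so that $q^\ell=e_\ell^2+4f_\ell^2$ is a genuine decomposition of the required shape. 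Taking real parts yields at once the closed form $e_\ell=\tfrac12(e_1+2f_1 i)^\ell+\tfrac12(e_1-2f_1 i)^\ell=\mathit{Re}\,(e_1+2f_1 i)^\ell$ asserted in \eqref{rec cl,dl}; equivalently, since $z_1$ has minimal polynomial $z^2-2e_1 z+q$, the $e_\ell$ obey the two-term recurrence $e_{\ell+1}=2e_1 e_\ell-q\,e_{\ell-1}$, the quartic counterpart of \eqref{recursion imp a}.

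The hard part will be verifying the two conditions that pin down this $e_\ell$ as precisely the constant occurring in the formula, namely $e_\ell\equiv 1\pmod 4$ and $(e_\ell,p)=1$. These are the exact quartic analogues of Claim 1 in the proof of Theorem 3.1 of \cite{PV8} (compare the use of Claims 1 and 2 in Theorem \ref{clique der 3 gen}), and I would invoke them from there. The congruence is elementary: from $e_{\ell+1}=e_1 e_\ell-4f_1 f_\ell\equiv e_1 e_\ell\equiv e_\ell\pmod 4$ and $e_1\equiv 1\pmod 4$ one gets $e_\ell\equiv 1\pmod 4$ by induction. The coprimality is the subtle point: since $p\equiv 1\pmod 4$ splits in $\Z[i]$ as $p=\pi\bar\pi$, the hypothesis $(e_1,p)=1$ forces $z_1$ to be, up to a unit, one of the primitive factors $\pi^r$ or $\bar\pi^r$ rather than a multiple of $p$; consequently $z_1^\ell$ is also coprime to $p$ in $\Z[i]$ and $p\nmid e_\ell$. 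This is the step where one must control the sign and associate ambiguities in the solutions of $q^\ell=X^2+4Y^2$, and it is exactly what guarantees that the $e_\ell$ built from $z_1$ coincides with the one selected by Theorem \ref{prop k small l 3}($c$).

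Finally I would assemble the pieces: substituting the constructed $e_\ell$ into the displayed formula for $\mathcal{K}_3(\G(4,q^\ell))$ gives the stated expression. For the ``determined by'' assertion, note that from $\mathcal{K}_3(\G(4,q))=\frac{q(q-1)(q-6e_1-11)}{2^7\cdot 3}$ one recovers $e_1$ uniquely once $q$ is known, and since $\mathit{Re}(\bar w)=\mathit{Re}(w)$ the value $e_\ell=\mathit{Re}\,(e_1+2f_1 i)^\ell$ does not depend on the sign of $f_1$ (which is itself determined up to sign by $q=e_1^2+4f_1^2$). Hence $e_1$ alone determines every $e_\ell$, and therefore $\mathcal{K}_3(\G(4,q))$ determines $\mathcal{K}_3(\G(4,q^\ell))$ for all $\ell\in\N$, as claimed.
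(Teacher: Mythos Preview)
Your proposal is correct and follows essentially the same plan as the paper: introduce $z_1=e_1+2f_1 i$, use $q^\ell=\|z_1^\ell\|^2$ to produce the decomposition $q^\ell=e_\ell^2+4f_\ell^2$, derive the same coupled recursions and the two-term recurrence $e_{\ell+1}=2e_1 e_\ell-q\,e_{\ell-1}$, verify the normalizing conditions $e_\ell\equiv 1\pmod 4$ and $(e_\ell,p)=1$, and then apply item ($c$) of Theorem~\ref{prop k small l 3}.

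The one substantive difference is in the proof of $(e_\ell,p)=1$. A small caution: the claims in \cite{PV8} that you propose to invoke are the \emph{cubic} ones; the paper here does not cite the quartic coprimality from \cite{PV8} but proves it from scratch by a minimality-contradiction argument --- it expresses $e_\ell$ via the explicit sum \eqref{e ies 4}, checks $e_2\equiv 2e_1^2\pmod p$ directly, and then shows that a least $L$ with $p\mid e_L$ would force $p\mid e_{L-1}$. Your alternative via unique factorization in $\Z[i]$ is shorter and more conceptual: $(e_1,p)=1$ rules out $p\mid z_1$, so $z_1$ is an associate of $\pi^r$ (or $\bar\pi^r$); then $p\mid e_\ell$ would give $\pi\mid 2e_\ell=z_1^\ell+\bar z_1^\ell$ and hence $\pi\mid \bar\pi^{r\ell}$, a contradiction. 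Both routes are valid; yours bypasses the somewhat intricate inductive computation in the paper at the cost of appealing to arithmetic in $\Z[i]$, while the paper's argument stays entirely elementary. Your closing paragraph, making explicit that $\mathcal{K}_3(\G(4,q))$ recovers $e_1$ and that $e_\ell$ is invariant under $f_1\mapsto -f_1$, spells out the ``determined by'' claim more carefully than the paper does.
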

\begin{proof}
	It is well known that the equation $q=X^2+ 4 Y^2$ with $p\equiv 1\pmod{4}$ always has a solution $(x,y)$ satisfying $(x,p)=1$. 
	In particular, since $q\equiv 1 \pmod{8}$ then $q\equiv 1 \pmod{4}$ as well.
	Let $e_1, f_1$ be the solution of the above equation with $e_1\equiv 1\pmod{4}$ and $(e_1,p)=1$. 
	Notice that if we take 
	$z_{x,y}=x+iy$, then $p = \|z_{e_1,f_1} \|^2$, so we have that 
	$$q^{\ell} = \|z_{e_1,f_1}^{\ell}\|^2.$$
	As in the proof of Theorem \ref{clique der 3 gen}, 
	we can put 
	$z_{e_{1},f_{1}}^{\ell}=: z_{e_{\ell},f_{\ell}}$, where $e_{\ell},f_{\ell}$ are defined recursively as follows
	\begin{equation}\label{recursion t4}
		e_{\ell+1}=e_1 e_{\ell}-4f_{1} f_{\ell} \qquad \text{and} \qquad f_{\ell+1}= e_1 f_{\ell}+f_1 e_{\ell}.
	\end{equation}	
	Both sequences $\{e_{\ell}\}_{\ell \in \N_0}$ and $\{f_{\ell}\}_{\ell \in \N_0}$ also satisfy the recursion 
	\begin{equation}\label{recursion cd4}
		r_{\ell+1}=2e_1 r_{\ell}- q \cdot r_{\ell-1}.
	\end{equation}		
	It can be shown that
	$$f_{\ell+1}=f_1 \Big(\sum_{i=1}^{\ell} e_{i} e_1^{\ell-i} + e_{1}^{\ell} \Big) \quad \text{and} \quad  
	e_{\ell} = \tfrac{1}{f_1}(f_{\ell+1} - e_1 f_{\ell}) = \sum_{i=1}^{\ell} e_{i} e_1^{\ell-i} -e_1 \sum_{i=1}^{\ell-1}e_{i} e_1^{\ell-1-i} $$
	so we have that
	\begin{equation}\label{e ies 4}
		e_{\ell+1}= e_1 \sum_{i=1}^{\ell} e_{i} e_1^{\ell-i}  - q \sum_{i=1}^{\ell-1}e_{i} e_1^{\ell-1-i}- 4f_{1}^2 e_{1}^{\ell-1}.
	\end{equation} 
	As in proof of Theorem 3.1 of \cite{PV8}, we can show that $(e_{\ell},p)=1$ and $e_{\ell}\equiv 1\pmod{4}$.
	Indeed, notice that \eqref{recursion t4} implies that $e_{\ell+1}\equiv e_{\ell}\pmod{4}$
	and by hypothesis $e_1\equiv 1 \pmod{4}$ and so $e_{\ell}\equiv 1 \pmod{4}$ for all $\ell\ge 1$.
	 
	On the other hand, we have that $(e_1,p)=1$ by hypothesis.
	Notice that $e_2=e_{1}^2-4 f_{1}^2$ and $f_2=2e_{1}f_{1}$. 
	By taking into account that $q=e_{1}^2+4 f_{1}^2$, 
	we obtain that $e_2=2 e_{1}^2 -q \equiv 2e_{1}^2 \pmod{p}$. 
	Since $p>4$ is prime and $(e_1,p)=1$, we obtain that $e_2\not\equiv 0\pmod{p}$  and thus $(e_2,p)=1$.
	We now prove that $(e_\ell,p)=1$ for any $\ell \ge 3$ by contradiction.
	
	Suppose that the second statement of the claim is false, so there exists a minimum $L>2$ such that $p\mid e_L$, that is $e_{L}\equiv 0 \pmod{p}$. 
	By \eqref{e ies 4}, we obtain that 
	$$ e_1 \sum_{i=1}^{L-1} e_{i} e_1^{L-1-i} - 4f_{1}^2 e_{1}^{L-2} \equiv e_{L} \equiv 0\pmod{p},$$
	and using that $4 f_{1}^2= q-e_1^2$ we get
	$$e_1 \sum_{i=1}^{L-1} e_{i} e_1^{L-1-i} + e_{1}^{L} = e_1 \big(e_1^{L-1} +\sum_{i=1}^{L-1} e_{i} e_1^{L-1-i}  \big) 
	\equiv 0\pmod{p}.$$
	Since $(e_1,p)=1$, we have that 
	\begin{equation} \label{cong 1}
		e_1^{L-1} +\sum_{i=1}^{L-1} e_{i} e_1^{L-1-i}\equiv 0\pmod{p}.
	\end{equation}
	Notice that 
	$$\sum_{i=1}^{L-1} e_{i} e_1^{L-1-i}= e_{L-1}+\sum_{i=1}^{L-2} e_{i} e_1^{L-1-i}.$$
	By applying \eqref{e ies 4} with $\ell=L-1$ we arrive at
	$$e_{L-1}\equiv e_1 \sum_{i=1}^{L-2} e_{i} e_1^{L-2-i} - 4f_{1}^2 e_{1}^{L-3} \equiv \sum_{i=1}^{L-2} e_{i} e_1^{L-1-i} + e_{1}^{L-1} \pmod{p}$$
	where we again used that $4 f_1^2 = q-e_1^2$.
	Thus, we have that
	$$2 e_{L-1}\equiv e_{L-1} + \sum_{i=1}^{L-2} e_{i} e_1^{L-1-i} + e_{1}^{L-1} \equiv e_{1}^{L-1} + \sum_{i=1}^{L-1} e_{i} e_{1}^{L-1-i} \equiv 0\pmod{p},$$
	by \eqref{cong 1}. Hence $e_{L-1}\equiv 0 \pmod{p}$ since $(2,p)=1$, which contradicts the minimality of $L$. 
	Therefore $(e_{\ell},p)=1$ for all $\ell\in \mathbb{N}$. This proves the claim. 
	
	In order to prove that $\mathcal{K}_{3}(\G(4,q^{\ell}))$ is determined by $\mathcal{K}_{3}(\G(4,q))$, 
	notice that if $q\equiv 1 \pmod{8}$ then $q^{\ell}\equiv 1 \pmod{8}$ as well, 
	so by item ($c$) of Proposition \ref{prop k small l 3}, it is enough to put every $e_{\ell}$ in terms of $e_1$ and $f_1$ only, since the value.
	By solving the linear recurrence \eqref{recursion cd4} and by recalling that $e_2=e_{1}^2-4 f_1^2$ and $f_2=2e_1 f_1$, 
	we obtain that  $e_\ell$ are as given in \eqref{rec cl,dl}.
	Therefore, the value of $\mathcal{K}_{3}(\G(4,q^{\ell}))$ is determined by $\mathcal{K}_{3}(\G(4,q))$, as desired.	
\end{proof}

\begin{exam}
	Let $p=17$. Since $17=1^2+ 4 \cdot 2^2$, we take $e_1=1$ and $f_1=2$.
	The number of cliques $\mathcal{K}_{3}(\G(4,5^{\ell}))$ is given for any $\ell \in \N$ by 
	$$\mathcal{K}_{3}(\G(4,17^{\ell}))= \frac{17^{\ell}(17^{\ell}-1)(17^{\ell}-6e_{\ell}-11)}{2^7\cdot 3}$$
	with  
	$$e_{\ell}= \tfrac 12 (1+4 i)^{\ell}+ \tfrac 12 (1-4 i)^{\ell}=\mathit{Re} (1+4 i)^{\ell}$$ 
	In Table 2 we give the values of $\mathcal{K}_{3}(\G(4,5^{4\ell}))$ for the first five values of $\ell$ 
	\begin{table}[h!]
		\caption{Values of $\mathcal{K}_{3}(\G(4,17^{\ell}))$}
		\begin{tabular}{|c|c|c|}
			\hline
			$\ell$ & $e_{\ell}$ & Values of $\mathcal{K}_{3}(\G(4,17^{\ell}))$\\ \hline 
			1 & $1$  & $0$\\
			2 & $-15$ & $79764$\\
			3 & $-47$ & $325790856$\\ 
			4 & $161$ & $1499479239720$\\
			5 & $761$  & $7430192286281890$\\
			\hline 
		\end{tabular}
	\end{table}
%
	\hfill $\lozenge$
\end{exam}

\section{ The number $\mathcal{K}_{4}(G_{R}(k))$ for $k=2,3,4$}\label{sec 5}

In this section we study the number $\mathcal{K}_{4}(G_{R}(k))$, for $(R,\frak{m})$ a local ring for $k=2,3,4$,
by showing that we can always obtain this value in terms of $\mathcal{K}_{4}(G_{R/\frak{m}}(k))$ for $k=2,3,4$.
In the case that $R=\ff_{p^{\ell}}$, 
we show that we can alwas obtain the value of $\mathcal{K}_{4}(\G(2,p^{\ell}))$ recursively from 
$\mathcal{K}_{4}(\G(2,p))$ when $p\equiv 1 \pmod{4}$.

Given $\chi,\psi$ multiplicative characters of $\ff_{q}$ with extension $\chi(0)=\psi(0)=0$, the usual 
Jacobi sums is defined by $J(\chi,\psi)=\sum_{a\in \ff_{q}}\chi(a)\psi(1-a)$, 
we have also the symbol $\binom{\psi}{\chi}=\frac{\psi(-1)}{q}J(\chi, \overline{\psi})$.
For a multiplicative character $\chi_k$ of $\ff_{q}$ of order $k$ with $q\equiv 1\pmod{k}$ and
given $\vec{t}=(t_1,t_2,t_3,t_4.t_5)$, the hypergeometric functions 
$$\setlength\arraycolsep{1pt}
{}_3 F_2\left(\vec{t};1\right)_{q,k}=\setlength\arraycolsep{1.5pt}
{}_3 F_2\left(\begin{matrix}\chi_{k}^{t_1},& &\chi_{k}^{t_2},& &\chi_{k}^{t_3}\\[.3em] 
	&&\chi_{k}^{t_4},&&\chi_{k}^{t_5}&\end{matrix};1\right)_{q}=
\frac{q}{q-1}\sum_{\chi}\tbinom{\chi_{k}^{t_1} \chi}{\chi} \tbinom{\chi_{k}^{t_2}\chi}{\chi_{k}^{t_4}\chi}\tbinom{\chi_{k}^{t_3}\chi}{\chi_{k}^{t_5}\chi}\chi(1),$$
where the sum is taken over all of the multiplicative characters $\chi$ of $\ff_{q}$
\begin{prop}\label{prop k small l 4}
	Let $(R,\frak{m})$ be a finite commutative local ring with $m=|\frak{m}|=q^{\beta}$ and residue field $R/\frak{m} \simeq \ff_q$.
	Let $k\in \N$ such that $k \mid \frac{q-1}{2}$ if $q$ is odd or else $k\mid q-1$ for $q$ even.
	Then, we have the following cases:
	\begin{enumerate}[($a$)]
		\item $(k=2)$ 
	Let $q =p^r\equiv 1 \pmod{4}$ for a prime $p$. Write $q = x^2 +4y^2$
	for integers $x$ and $y$, such that $p\nmid x$ when $p\equiv 1 \pmod{4}$. 
	Then
		$$\mathcal{K}_{4}(G_{R}(2))=\frac{q^{4\beta+1}(q-1)((q-9)^2- 16 y^2)}{2^9\cdot 3}.$$
		\item ($k=3$) If $q=p^r$ for a prime $p$, such that $3\mid q- 1$ if $q$ is even,
		or else $6\mid q-1$ if $q$ is odd. When $p\equiv 1 \pmod{3}$, 
		write $4q= c^2 + 27 d^2$
		for $c,d \in \Z$ such that $c\equiv 1 \pmod{3}$ and $p\nmid c$. 
		When $p\equiv 2 \pmod{3}$, let $c = -2(-p)^{\frac{r}{2}}$. 
		If $\chi_3$ is a multiplicative character of $\ff_{q}$ of order $3$ and 
		$\varepsilon$ is the trivial multiplicative character,
		then
		$$\mathcal{K}_{4}(G_{R}(3))= \tfrac{q^{4\beta+1}(q-1)}{2^3 3^7}\big[ q^2+5q(c-11)+10c^2-85c+316+12 q^2 \setlength\arraycolsep{1pt}
		{}_3 F_2\left(\begin{matrix}\chi_{3},& &\chi_{3},& &\overline{\chi}_{3}\\ 
			&&\varepsilon,&&\varepsilon&\end{matrix};1\right)_{q}\big].$$
		\item ($k=4$) Let $q=p^r\equiv 1\pmod{8}$ for a prime $p$. 
		Write $q=e^2+4f^2$ for $e,f\in\Z$, such that $e\equiv 1\pmod{4}$, and $p\nmid e$ when $p\equiv 1\pmod{4}$. 
		Write $q = u^2 + 2v^2$
		for integers $u$ and $v$,
		such that $u\equiv 3 \pmod{4}$, and $p\nmid u$ when $p\equiv 1, 3 \pmod{8}$.
		If $\varphi$ and $\chi_4$ are multiplicative characters of $\ff_{q}$ of order $2$ and $4$, respectively and 
		$\varepsilon$ is the trivial multiplicative character, 
		then
		\begin{eqnarray*}
			\mathcal{K}_{4}(G_{R}(4))& = \frac{q^{4\beta+1}(q-1)}{2^{15}\cdot 3} & \cdot \Big[ q^2-2q(15x+101) + 304x^2 + (930-40u)x + 801 + 120u^2\\
			&&  + 12q^2 \setlength\arraycolsep{1pt}
			{}_3 F_2\left(\begin{matrix}\chi_{4},& &\chi_{4},& &\overline{\chi_{4}}\\ 
				&&\varepsilon,&&\varepsilon&\end{matrix};1\right)_{q} +
			30q^2 \setlength\arraycolsep{1pt}
			{}_3 F_2\left(\begin{matrix}\chi_{4},& &\varphi,& &\varphi\\ 
			&&\varepsilon,&&\varepsilon&\end{matrix};1\right)_{q} \Big]
		\end{eqnarray*} 		
	\end{enumerate}
\end{prop}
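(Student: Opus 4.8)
The plan is to obtain the statement as a direct application of the blow-up reduction in Theorem \ref{Main teo}, combined with the explicit evaluations of $\mathcal{K}_4(\G(k,q))$ for $k=2,3,4$ from \cite{DM}, exactly paralleling the proof of Theorem \ref{prop k small l 3} for the case $\ell=3$.

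First I would verify the two hypotheses of Theorem \ref{Main teo}, namely $(k,q)=1$ and the undirectedness condition $(k,q-1)\mid\frac{q-1}{2}$. When $q$ is odd, the assumption $k\mid\frac{q-1}{2}$ gives $(k,q-1)=k\mid\frac{q-1}{2}$, which is precisely the undirectedness condition of Corollary \ref{-1RFq}; when $q$ is even the graph is undirected automatically, since $-1=1\in U_R(k)$. In both cases $k\mid q-1$, and since $(q-1,q)=1$ this forces $(k,q)=1$. Theorem \ref{Main teo} then applies and, writing $m=|\frak m|=q^{\beta}$, yields
$$\mathcal{K}_4(G_R(k)) = \mathcal{K}_4(\G(k,q))\cdot m^4 = q^{4\beta}\,\mathcal{K}_4(\G(k,q)).$$

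It then remains to substitute the closed formulas for $\mathcal{K}_4(\G(k,q))$, $k=2,3,4$, which are the $\ell=4$ specializations of the general formula of \cite{DM} (the analogues of the Corollaries 2.10--2.12 quoted for $\ell=3$). Each of these carries a $q(q-1)$ prefactor: for $k=2$ an elementary polynomial in the parameters $x,y$ of $q=x^2+4y^2$; for $k=3,4$ a polynomial in the class-number parameters together with the ${}_3F_2$ hypergeometric terms, which do not simplify further and are carried through verbatim. Multiplying each expression by $q^{4\beta}$ and absorbing the factor $q$ already present in the field formula produces the displayed $q^{4\beta+1}$ prefactor in each case.

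I expect no genuine mathematical obstacle here; the content of the result is entirely in the reduction, and the rest is bookkeeping. The one point that needs care is matching the normalizations and the sign/congruence conventions for the quadratic-form parameters between \cite{DM} and the present statement --- in particular reconciling the representation $q=e^2+4f^2$ with the one used in \cite{DM} (as already discussed in the remark following Theorem \ref{prop k small l 3}), and checking that the choices $c\equiv1\pmod3$, $e\equiv1\pmod4$ and $u\equiv3\pmod4$ made here single out the same Jacobi-sum branches used to fix the hypergeometric evaluations there. Aligning these conventions, rather than any hard estimate, is the main thing to get right.
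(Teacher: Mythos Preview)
Your proposal is correct and matches the paper's own proof essentially verbatim: the paper simply says that all assertions follow directly from Theorem~\ref{Main teo} together with Corollaries~2.3, 2.5 and~2.7 of \cite{DM}, which is precisely the reduction-plus-substitution argument you outline. Your additional care in checking the hypotheses of Theorem~\ref{Main teo} and in flagging the need to reconcile the quadratic-form conventions is entirely appropriate and goes slightly beyond what the paper spells out.
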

\begin{proof}
All of the assertions follows directly from Theorem \ref{Main teo} and Corollaries 2.3, 2.5 and 2.7 from \cite{DM}.
\end{proof}

\medskip
\begin{rem}
		By taking $R=\Z_{p^{\alpha}}$ in item ($a$) of Proposition \ref{prop k small l 4}, 
		we have that $\beta=\alpha-1$ and $q=p$ prime, in this case we have that
		\begin{equation}\label{valor K 4 2 Zp}
			\mathcal{K}_{4}(G_{\Z_{p^{\alpha}}}(2))=\frac{p^{4\alpha-3}(p-1)((p-9)^2- 16 y^2)}{2^9\cdot 3}.
		\end{equation}
	where $y$ is as in the hypothesis. On the other hand, in \cite{BB} the authors showed that
	$$\mathcal{K}_{4}(G_{\Z_{p^{\alpha}}}(2))=\frac{p^{2\alpha-1}(p-1)\Big(p^{2\alpha-2}\big((p-9)^2-2p\big) +J(\psi,\varphi)^{2}+\overline{J(\psi,\varphi)}^{2}\Big)}{2^9\cdot 3},$$
	where $J(\psi,\varphi)=\sum_{a\in \Z_{p^{\alpha}}}\psi(a)\varphi(1-a)$ is the Jacobi sum of $\psi$ and $\varphi$, with
	$\psi$ and $\varphi$ Dirichlet characters modulo $p^{\alpha}$ of order $4$ and 
	$2$ respectively.
	Thus, by the above equalitties we can obtain that 
		\begin{equation*}
		J(\psi,\varphi)^{2}+\overline{J(\psi,\varphi)}^{2}=2p^{2\alpha-2}(p-8y^2),	
	\end{equation*}
	This formula was found very recently in \cite{BB2} from other identity.
\end{rem}

As in theorems \ref{clique der 3 gen} and \ref{clique der 4}, we obtain the following

\begin{thm} \label{clique 4 der 2}
	If $p$ is a prime with $p\equiv 1 \pmod{4}$  
	then the number of cliques $\mathcal{K}_{4}(\G(2,p^{\ell}))$ is determined by 
	$\mathcal{K}_{4}(\G(2,p))$ for every $\ell \in \N$. 
	Moreover, $\mathcal{K}_{4}(\G(2,p^{\ell}))$ is given by
	$$\mathcal{K}_{4}(\G(2,p^{\ell}))=\frac{p^{\ell}(p^{\ell}-1)((p^{\ell}-9)^2- 16 f_{\ell}^2)}{2^9\cdot 3}.$$
	where the number $f_\ell$ is given by 
	\begin{equation} \label{rec xl,yl}
		f_{\ell}= -\tfrac{i}{4} (e_1+2 f_1 i)^{\ell}+ \tfrac{i}{4}(e_1-2 f_1 i)^{\ell}=\mathit{Im} \,(e_1+2 f_1 i)^{\ell},
	\end{equation}
	where $e_1$ and $f_1$ are integral solutions of $p=X^2+ 4Y^2$ with $e_1 \equiv 1 \pmod 4$ and $(e_1,p)=1$.
\end{thm}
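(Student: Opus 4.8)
The plan is to follow closely the proof of Theorem \ref{clique der 4}, reducing the statement to item ($a$) of Proposition \ref{prop k small l 4} applied to the finite field $R=\ff_{p^\ell}$ (so that $\beta=0$ and $m=1$). Since $p\equiv 1\pmod 4$, the form $X^2+4Y^2$ represents $p$ by a pair with first coordinate coprime to $p$, and I would fix the solution $e_1,f_1$ with $e_1\equiv 1\pmod 4$ and $(e_1,p)=1$ (the bound $e_1^2<p$ forces $p\nmid e_1$, and the sign of the odd number $e_1$ can be adjusted to make it $\equiv 1\pmod 4$). The key device is the complex number $z_{e_1,f_1}=e_1+2f_1 i$, whose norm is $\|z_{e_1,f_1}\|^2=e_1^2+4f_1^2=p$, so that $p^{\ell}=\|z_{e_1,f_1}^{\ell}\|^2$. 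Writing $z_{e_1,f_1}^{\ell}=:z_{e_\ell,f_\ell}=e_\ell+2f_\ell i$ produces a pair $(e_\ell,f_\ell)$ with $p^{\ell}=e_\ell^2+4f_\ell^2$, and from $z^{\ell+1}=z\,z^{\ell}$ one reads off the recursions \eqref{recursion t4}, with both sequences also obeying the two-term recurrence $r_{\ell+1}=2e_1 r_\ell-p\,r_{\ell-1}$.

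The crucial point is to guarantee that $(e_\ell,f_\ell)$ is exactly the representation required by Proposition \ref{prop k small l 4}($a$), namely that $p\nmid e_\ell$ (and $e_\ell\equiv 1\pmod 4$). Here I would invoke the argument already carried out in the proof of Theorem \ref{clique der 4}: the recursion gives $e_{\ell+1}\equiv e_\ell\pmod 4$, whence $e_\ell\equiv 1\pmod 4$ for all $\ell$; and the same contradiction argument, using $4f_1^2=p-e_1^2\equiv -e_1^2\pmod p$, shows $(e_\ell,p)=1$ for all $\ell$. (That argument is stated there for $q=p^r$ but uses only $q\equiv 0\pmod p$, which holds here with $q=p$.) Once this is in hand, and noting that $p^{\ell}\equiv 1\pmod 4$ whenever $p\equiv 1\pmod 4$, item ($a$) of Proposition \ref{prop k small l 4} with $q=p^{\ell}$ and $y=f_\ell$ gives directly
$$\mathcal{K}_{4}(\G(2,p^{\ell}))=\frac{p^{\ell}(p^{\ell}-1)((p^{\ell}-9)^2-16 f_\ell^2)}{2^9\cdot 3}.$$

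Finally, I would solve the linear recurrence $f_{\ell+1}=2e_1 f_\ell-p\,f_{\ell-1}$ with initial data $f_0=0$ and $f_1$ (equivalently $f_1$ and $f_2=2e_1 f_1$): its characteristic roots are $e_1\pm 2f_1 i$, and imposing the initial conditions yields the closed form \eqref{rec xl,yl}. To conclude that $\mathcal{K}_{4}(\G(2,p^{\ell}))$ is determined by $\mathcal{K}_{4}(\G(2,p))$, I would observe that the case $\ell=1$ recovers $f_1^2$ from the value $\mathcal{K}_{4}(\G(2,p))$, hence $e_1^2=p-4f_1^2$ and, via $e_1\equiv 1\pmod 4$, the integer $e_1$ itself, while the sign of $f_1$ is immaterial since only $f_\ell^2$ enters the formula. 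I expect the only genuine obstacle to be the coprimality claim $(e_\ell,p)=1$, which is precisely what legitimizes selecting the representation with $p\nmid e_\ell$ and thus applying the proposition; but since this is exactly the content of the claim established in Theorem \ref{clique der 4}, it can simply be cited, and everything else is routine.
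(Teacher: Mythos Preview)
Your proposal is correct and follows essentially the same route as the paper: introduce $z_{e_1,f_1}=e_1+2f_1 i$ with $\|z\|^2=p$, read off $(e_\ell,f_\ell)$ from $z^\ell$, cite the proof of Theorem~\ref{clique der 4} for $e_\ell\equiv 1\pmod 4$ and $(e_\ell,p)=1$, apply Proposition~\ref{prop k small l 4}($a$) at $q=p^\ell$, and solve the two-term recurrence to get \eqref{rec xl,yl}. Your final paragraph, recovering $f_1^2$ (hence $e_1$ and $f_1$ up to an irrelevant sign) from $\mathcal{K}_4(\G(2,p))$, is in fact slightly more explicit than the paper's justification of the ``determined by'' claim.
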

\begin{proof}
It is well known that the equation $p^2=X^2+ 4Y^2$ with $p\equiv 1\pmod{4}$ always has a solution $(x,y)$ satisfying $(x,p)=1$. 
Let $e_1, f_1$ be the solution of the above equation with $e_1\equiv 1\pmod{4}$ and $(e_1,p)=1$. 
Notice that if we take 
$z_{x,y}=x+2iy$, then $p = \|z_{e_1,f_1} \|^2$, so we have that 
$$p^{\ell} = \|z_{e_1,f_1}^{\ell}\|^2.$$
As in the proof of Theorem \ref{clique der 4}, 
we can put 
$z_{e_{1},f_{1}}^{\ell}=: z_{e_{\ell},f_{\ell}}$, where $e_{\ell},f_{\ell}$ are defined recursively as follows
\begin{equation}\label{recursion 4 t2}
	e_{\ell+1}=e_1 e_{\ell}-4f_{1} f_{\ell} \qquad \text{and} \qquad f_{\ell+1}= e_1 f_{\ell}+f_1 e_{\ell}.
\end{equation}	
Both sequences $\{e_{\ell}\}_{\ell \in \N_0}$ and $\{f_{\ell}\}_{\ell \in \N_0}$ also satisfy the recursion 
\begin{equation}\label{recursion 4 cd2}
	r_{\ell+1}=2e_1 r_{\ell}- p \cdot r_{\ell-1}.
\end{equation}		
By the proof of Theorem \ref{clique der 4}, we have that $(e_{\ell},p)=1$ and $e_{\ell}\equiv 1\pmod{4}$.

In order to prove that $\mathcal{K}_{4}(\G(2,p^{\ell}))$ is determined by $\mathcal{K}_{4}(\G(2,p))$, 
notice that if $p\equiv 1 \pmod{4}$ then $p^{\ell}\equiv 1 \pmod{4}$ as well, 
so by  Proposition \ref{prop k small l 4}, it is enough to put every $f_{\ell}$ in terms of $e_1$ and $f_1$ only.
By solving the linear recurrence \eqref{recursion cd4} and by recalling that $e_2=e_{1}^2-4 f_1^2$ and $f_2=2e_1 f_1$, 
we obtain that  $f_\ell$ are as given in \eqref{rec xl,yl}.
Therefore, the value of $\mathcal{K}_{4}(\G(2,p^{\ell}))$ is determined by $\mathcal{K}_{4}(\G(2,p))$, as asserted.
\end{proof}
\begin{exam}
	Let $p=5$. Since $5=1^2+4\cdot 1^2$, we take $e_1=1$ and $f_1=1$.
	The value of $\mathcal{K}_{4}(\Gamma(2,5^{\ell}))$ is given by 
	$$\mathcal{K}_{4}(\G(2,5^{\ell}))=\frac{5^{\ell}(5^{\ell}-1)((5^{\ell}-9)^2- 16 f_{\ell}^2)}{2^9\cdot 3}.$$
	where $f_{\ell}$ is give by \eqref{rec cl,dl}, 
	$$f_{\ell}= -\tfrac{i}{4} (1+2 i)^{\ell}+ \tfrac{i}{4}(1-2i)^{\ell}=\mathit{Im}\,(1+2 i)^{\ell}.$$
	In Table 3 we give the values of $\mathcal{K}_{4}(\G(2,5^{\ell}))$ for the first five values of $\ell$. 
	\begin{table}[h!]
		\caption{Values of $\mathcal{K}_{4}(\G(2,5^{\ell}))$}
		\begin{tabular}{|c|c|c|}
			\hline
			$\ell$ & $f_{\ell}$ & Values of $\mathcal{K}_{4}(\G(2,5^{\ell}))$\\ \hline 
			1  & $1$ & $0$\\
			2 & $2$ & $75$\\
			3  & $1$ & $135625$\\ 
			4  & $22$ & $283140000$\\
			5  & $-19$ & $61674593750$\\
			\hline 
		\end{tabular}
	\end{table}
%
\end{exam}

%
%
%
%
%

\end{document}